\newtheorem{thm}{Theorem}[section]  
\newtheorem*{un-no-thm}{Theorem}
\newtheorem{cor}[thm]{Corollary}     
\newtheorem{lem}[thm]{Lemma}         
\newtheorem{prop}[thm]{Proposition}
\newtheorem{bigthm}{Theorem}
\newtheorem{bigcor}[bigthm]{Corollary}
\newtheorem{bigadd}[bigthm]{Addendum}
\theoremstyle{definition}
\newtheorem{defn}[thm]{Definition}   
\theoremstyle{definition}
\theoremstyle{definition}
\theoremstyle{remark}
\newtheorem{rem}[thm]{Remark}
\newtheorem*{acks}{Acknowledgements}
\newtheorem*{out}{Outline}
\begin{document}
\title{On the moduli space of $A_\infty$-structures}
\date{\today}
\author{John R. Klein}
\address{Mathematics Department, Wayne State University, Detroit, MI USA 48202}
\email{klein@math.wayne.edu}
\author{Sean Tilson}
\address{Institut f\"ur Mathematik, Universit\"at Osnabr\"uck, 49076 Osnabr\"uck, Germany}
\email{sean.tilson@uni-osnabrueck.de}

\begin{abstract} We study the moduli space of $A_\infty$ structures 
on a topological space as well as the moduli space of $A_\infty$-ring
structures on a fixed module spectrum. In each case we show that the moduli
space sits in a homotopy fiber sequence in which the other terms 
are representing spaces for Hochschild cohomology.
\end{abstract}

\dedicatory{To Tom Goodwillie on his sixtieth birthday.}
\thanks{The first author was partially supported by the NSF}
\maketitle
\setlength{\parindent}{15pt}
\setlength{\parskip}{1pt plus 0pt minus 1pt}
\def\Top{{\rm Top}}
\def\wTop{\text{\rm w}\bold T}
\def\wT{\text{\rm w}\bold T}
\def\Sp{\bold S\bold p}
\def\vo{\varOmega}
\def\vs{\varSigma}
\def\smsh{\wedge}
\def\flush{\flushpar}
\def\id{\text{id}}
\def\dbslash{/\!\! /}
\def\codim{\text{\rm codim\,}}
\def\:{\colon}
\def\holim{\text{\rm holim\,}}
\def\hocolim{\text{\rm hocolim\,}}
\def\colim{\text{\rm colim\,}}
\def\hodim{\text{\rm hodim\,}}
\def\hocodim{\text{hocodim\,}}
\def\Bbb{\mathbb}
\def\bold{\mathbf}
\def\Aut{\text{\rm aut}}
\def\cal{\mathcal}
\def\Sec{\text{\rm sec}}
\def\Secst{\text{\rm sec}^{\text{\rm st}}}
\def\maps{\text{\rm map}}
\def\orb{\cal O}
\def\hoP{\text{\rm ho}P}
\def\endo{\text{\rm end}}
\def\ad{\text{\rm ad}}
\newcommand{\HH}{\operatorname{HH}}
\newcommand{\bimod}{\operatorname{bimod}}
\newcommand{\Der}{\operatorname{Der}}
\newcommand{\End}{\operatorname{end}}
\def\Mod{\text{\rm mod}}
\def\Alg{\text{\rm alg}}
\def\Mon{\text{\rm Mon}}

\setcounter{tocdepth}{1}
\tableofcontents
\addcontentsline{file}{sec_unit}{entry}

\section{Introduction \label{intro}}

There is a long history in algebraic topology of studying homotopy invariant versions of classical algebraic
structures. In the 1960s, a theory of $A_\infty$-spaces, that is, spaces equipped with
a coherently homotopy associative multiplication, was developed by
Stasheff \cite{Stasheff1}, \cite{Stasheff2} and extended by Boardman and Vogt \cite{Boardman-Vogt}. One of the main results of this  
theory is that such a space has the homotopy type of a loop space provided that its
monoid of path components forms a group. There was a hint in Stasheff's work of an obstruction theory for deciding when a space admits an $A_\infty$-structure. Later work by Robinson
in the context of $A_\infty$-ring spectra developed such an obstruction theory, in which
the obstructions lie in Hochschild cohomology 
\cite{Robinson1}, \cite{Robinson2} (see also \cite{Angeltveit}). Robinson's theory is directed
to the question as to whether the moduli space is non-empty and if so, the
problem of enumerating its path components.
Furthermore, Robinson's theory depends on an explicit model
for the Stasheff associahedron and is therefore not in any obvious way ``coordinate free.'' 
The purpose of this paper is to explain from a different perspective
 why Hochschild cohomology arises when studying moduli problems
associated with $A_\infty$-spaces and $A_\infty$-rings. 

The main problem addressed in this paper is identifying the homotopy type of 
the moduli space of $A_\infty$-ring structures on a fixed spectrum. 
As a warm up, we first investigate the related problem of  
indetifying the homotopy type of the moduli space
of $A_\infty$-structures on a fixed topological space.

\subsection*{$A_\infty$-structures on a space} 
An $A_\infty$-space is a based space which admits a mulitplication
that is associative up to higher homotopy coherence.
According to Boardman and Vogt \cite[th.~1.27]{Boardman-Vogt}, 
it is always possible to rigidify an $A_\infty$-space to a topological monoid
in a functorial way. Moreover, there is an appropriate sense in which the
homotopy category of $A_\infty$-spaces is equivalent to the homotopy
category of topological monoids (even more is true: 
Proposition \ref{prop:A-infty=monoid} below
says that in the derived sense, function complexes of topological
monoids are weak equivalent to the corresponding function complexes
of $A_\infty$-maps; this appears to be well-known \cite{Lurie}).
Hence, rather than working with $A_\infty$-spaces,
we can use topological monoids to define the moduli space of $A_\infty$-structures.

\begin{defn} \label{defn:CsubX}
If $X$ is a connected based space, 
Let $\cal C_X$ be the category whose objects are pairs
\[
(M,h)
\]
in which $M$ is a topological monoid and $h\: X \to M$ is a weak homotopy equivalence
of based spaces. A morphism $(M,h) \to (M',h')$ is a homomorphism $f\: M \to M'$ such that
$h ' = f\circ h$.

We define the {\it moduli space}
\[
\cal M_X = |\cal C_X|
 \]
to be the geometric realization of the nerve of $\cal C_X$.
\end{defn}

For a map 
$f\: Z \to Y$ of (unbased) spaces
we define {\it unstable Hochschild ``cohomology''}
\[
\cal H(Z;Y) 
\]
 to be the space of factorizations \[Z \to LY \to Y\] of $f$ in which 
$LY = \maps(S^1,Y)$  is the free loop space of $Y$ and $LY \to Y$ is the fibration given by evaluating a free
loop at the base point of $S^1$. Notice that $\cal H(Z;Y)$ has a preferred basepoint
given by the factorization $Z \to LY \to Y$ in which $Z \to LY$ is
the map given by sending $z \in Z$ to the constant loop with value $f(z)$.

If we fix $Y$, and let $\Top_{/Y}$ be the category of 
spaces over $Y$, then the assignment $Z\mapsto \cal H(Z;Y)$ defines a contravariant functor
$\Top_{/Y} \to \Top_\ast$.

\begin{rem} To justify the terminology, let $G$ be a topological
group. Consider the case when $Z = BG = Y$ is the classifying space
of $G$.  Then the fibration 
$LBG \to BG$ is fiber homotopy equivalent to the Borel construction
\[
EG \times_G G^\ad \to BG\, .
\]
in which $G^\ad$ denotes a copy of $G$ considered
as a left $G$-space via the adjoint action $g\cdot x := gxg^{-1}$ (see
e.g., \cite[\S9]{KSS}).

If we we make the latter fibration into a fiberwise spectrum 
by applying the suspension spectrum
functor to each fiber, then the associated  spectrum of global
sections is identified with the topological Hochschild cohomology spectrum 
\[
\HH^\bullet(S[G];S[G])\, , 
\] where $S[G] := \Sigma^\infty (G_+)$ is the 
suspension spectrum of $G$ (also known as the group ring of $G$ over the sphere
spectrum). 
\end{rem}

Recall for an $A_\infty$-space $X$, there is an inclusion $\Sigma X \to BX$
which we may regard as a morphism of $\Top_{/BX}$.
In particular, there is a restriction map 
\begin{equation} \label{eqn:map_of_fibration}
\cal H(BX;BX) \to \cal H(\Sigma X;BX)\, .
\end{equation}

\begin{bigthm} \label{bigthm:spaces} Assume $X$ has a preferred $A_\infty$-structure. Then the map \eqref{eqn:map_of_fibration} sits in a homotopy fiber
sequence
\[
\Omega^2 \cal M_X \to \cal H(BX,BX) \to \cal H(\Sigma X,BX)
\]
in which the double loop space $\Omega^2 \cal M_X$ is identified with the homotopy fiber at
the basepoint.
\end{bigthm}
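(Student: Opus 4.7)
The plan is to translate both sides of the asserted fiber sequence into derived loop spaces of mapping spaces and then match them directly.

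First, because every morphism in $\cal C_X$ is forced to be a weak equivalence (by commutativity with the weak equivalences $h$ and $h'$), the moduli space $\cal M_X$ is naturally the classifying space of an $\infty$-groupoid. A standard classification argument \`a la Dwyer--Kan identifies it with the homotopy fiber
\[
\cal M_X \;\simeq\; \text{hofib}\bigl( B\operatorname{haut}^{\text{mon}}(M) \to B\operatorname{haut}_*(X)\bigr)
\]
for any chosen topological monoid model $M$ of $X$, where the map forgets the monoid structure and transports along $h\: X \xrightarrow{\simeq} M$. Combining Proposition \ref{prop:A-infty=monoid} with the classifying-space equivalence $\operatorname{haut}^{\text{mon}}(M) \simeq \operatorname{haut}_*(BM) = \operatorname{haut}_*(BX)$ recasts the forgetful map as $\Omega\: \operatorname{haut}_*(BX) \to \operatorname{haut}_*(X)$. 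Looping twice and invoking the suspension--loop adjunction $\maps_*(X,X) \simeq \maps_*(X, \Omega BX) \simeq \maps_*(\Sigma X, BX)$ (under which $\id_X$ corresponds to the canonical inclusion $\iota\: \Sigma X \to BX$ coming from the $A_\infty$-structure) gives
\[
\Omega^2 \cal M_X \;\simeq\; \text{hofib}\bigl(\Omega_{\id} \maps_*(BX, BX) \to \Omega_{\iota} \maps_*(\Sigma X, BX)\bigr),
\]
with the map being precomposition with $\iota$.

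Second, for the right-hand side I use the exponential law: a factorization $Z \to LY \to Y$ of a map $f\: Z \to Y$ is the same data as a self-homotopy of $f$, so $\cal H(BX; BX) \simeq \Omega_{\id} \maps(BX, BX)$ and $\cal H(\Sigma X; BX) \simeq \Omega_{\iota} \maps(\Sigma X, BX)$ in the \emph{unbased} mapping spaces, and the restriction map \eqref{eqn:map_of_fibration} is again precomposition with $\iota$. The discrepancy between based and unbased mapping spaces is resolved by the evaluation-at-basepoint fibration
\[
\maps_*(Y, BX) \to \maps(Y, BX) \to BX,
\]
which after looping has base $\Omega BX$ for both $Y = BX$ and $Y = \Sigma X$; since the restriction map $\iota^*$ induces the identity on this common base, the homotopy fibers of the based and unbased restriction maps agree. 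Assembling these identifications yields the required fiber sequence.

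The chief obstacle I anticipate is the first step: showing that the nerve of the (otherwise set-theoretically presented) category $\cal C_X$ really does compute the derived homotopy fiber indicated on the right, as opposed to only its $\pi_0$-truncation. This requires a rectification of monoid structures so that the strict function complexes appearing in $\cal C_X$ capture the correct derived function complexes --- which is precisely the role of Proposition \ref{prop:A-infty=monoid}.
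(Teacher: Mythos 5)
Your proposal is correct and follows essentially the same route as the paper: identify $\cal M_X$ (via Dwyer--Kan) with a homotopy fiber of classifying spaces of homotopy automorphisms, convert monoid automorphisms into based self-maps of $BX$, and use the loop--suspension adjunction together with the exponential law $\cal H(Z;Y)\simeq \Omega_f\maps(Z,Y)$ and the evaluation fibration to match the based and unbased pictures. Two small caveats: (i) your opening identification $\cal M_X\simeq \text{hofib}(B\operatorname{haut}^{\text{mon}}(M)\to B\operatorname{haut}_*(X))$ only captures the components of $\cal M_X$ whose monoid structure is equivalent to the chosen $M$ --- the paper instead fibers the coproduct $\coprod_{[N]}Bh\Aut_{\Mon}(N)$ over $Bh\Aut_{\Top_*}(X)$ --- but this is immaterial after applying $\Omega^2$ at the basepoint; (ii) the equivalence $\operatorname{haut}^{\text{mon}}(M)\simeq\operatorname{haut}_*(BM)$, which you cite in passing, is the technical heart of the paper's Lemma \ref{lem:hochschild} (proved there by induction over attachments of free monoids $JU$), and it cannot be deduced from Proposition \ref{prop:A-infty=monoid}, since the paper proves that proposition \emph{from} this equivalence; so your argument silently assumes the one statement that actually requires work.
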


In fact, after some minor modification, 
the homotopy fiber sequence of Theorem \ref{bigthm:spaces} 
admits a double delooping, enabling one to recover the moduli space $\cal M_X$. 
If the underlying space of an object $Z\in \Top_{/Y}$ 
is equipped with a basepoint $\ast_Z \in Z$, then 
$\ast_Z$ becomes an object in its own right and we have a morphism $\ast_Z \to Z$.
This in turn induces a fibration of based spaces
\[
\cal H(Z;Y) \to \cal H(\ast_Z;Y)\, .
\]
We take 
\[
\tilde {\cal H}(Z,Y) =\text{fiber}(\cal H(Z;Y) \to \cal H(\ast_Z;Y))
\] 
to be the fiber at the basepoint. 
Concretely, this is the space of factorizations $Z \to LY \to Y$ in which 
$Z \to LY$ is a based map (where $LY$ is given the basepoint consisting
of the constant loop defined by the image of $\ast_Z$ in $Y$).

We can 
regard $\tilde {\cal H}(Z,Y)$ as a kind of reduced unstable cohomology.
It  defines a contravariant functor on $\Top_{\ast/Y}$, the category of based spaces over $Y$. In particular, we have a map
\begin{equation} \label{eqn:based_map_of_fibration}
\tilde{\cal H}(BX;BX) \to \tilde{\cal H}(\Sigma X;BX)\, .
\end{equation}

\begin{rem} The if $Z' \to Z$ is a map of based spaces over a space $Y$, then 
it is easy to see that the associated commutative diagram of based spaces
\[
\xymatrix{
\tilde{\cal H}(Z;Y)  \ar[r] \ar[d] & \tilde{\cal H}(Z';Y)\ar[d] \\
\cal H(Z;Y)  \ar[r]  & \cal H(Z';Y) \\
}
\]
is homotopy cartesian. In particular since $\Sigma X \to BX$ is a
based map, we can replace
$\cal H$ by $\tilde {\cal H}$ in Theorem \ref{bigthm:spaces}
and the statement of the theorem remains valid. 
\end{rem}

The advantage of using the reduced version is made clear by the following
companion to Theorem \ref{bigthm:spaces}.

\begin{bigadd} \label{bigadd:spaces} The map \eqref{eqn:based_map_of_fibration}
has a preferred non-connective double delooping 
$\cal B^2 \tilde{\cal H}(\Sigma X;X) \to \cal B^2\tilde{\cal H}(BX;X)$ which sits in 
a homotopy fiber sequence
\[
\cal M_X \to \cal B^2 \tilde{\cal H}(BX,BX) \to \cal B^2 \tilde{\cal H}(\Sigma X,BX)\, .
\]
\end{bigadd}

\begin{rem} The main idea of the proof of Addendum \ref{bigadd:spaces} involves
showing that the (moduli) space of $A_\infty$-spaces $Y$ which are weakly equivalent to $X$ 
(in an unspecified way) gives a double delooping of 
$\tilde{\cal H}(BX,BX)$, and similarly the space of based spaces $Y$ which
are weakly equivalent to $X$ is a double delooping of $\tilde{\cal H}(\Sigma X,BX)$.
\end{rem}

\subsection*{$A_\infty$-ring structures} 
In recent years the subject of algebraic structures on spectra
has been profoundly transformed by the existence of improved models
for the category of spectra that admit a strictly associative and commutative smash product.
What was once called an $A_\infty$-ring spectrum can now be regarded as simply
a monoid object in one of the new models. 

Henceforth,
we work in one of the good
symmetric monoidal categories of spectra.
To fix our context
we work in symmetric spectra. We will fix a connective commutative monoid
object $k$ in symmetric spectra. This will function as our ground ring.
The category of (left) $k$-modules will be denoted by $k\text{-}\Mod$.

By virtue of \cite{Schwede-Shipley}
(cf.\ \cite[chap.~II]{EKMM}), the category of $A_\infty$-ring spectra
is modeled by
the category of monoid objects in $k\text{-}\Mod$ and their homomorphisms. 
This category is denoted by $k\text{-}\Alg$. An object of $k\text{-}\Alg$ is called a {\it $k$-algebra}. There is a forgetful functor $k\text{-}\Alg \to k\text{-}\Mod$.

A morphism of either $k\text{-}\Mod$ or 
$k\text{-}\Alg$ is a fibration or a weak equivalence if it is one when considered in the underlying category of symmetric spectra. A morphism is a cofibration if it satisfies the
left lifting property with respect to the acyclic fibrations.
According to \cite[thm~4.1]{Schwede-Shipley},
these notions underly a model structure on $k\text{-}\Mod$ and $k\text{-}\Alg$.

\begin{defn} For a $k$-module $E$, the {\it moduli space}
\[
\cal M_E
\]
is the classifying space of the category ${\cal C}_E$ whose objects are pairs
\[
(R,h)
\]
in which $R$ is a $k$-algebra and
 $h\: E \to R$ is a weak equivalence of  
$k$-modules. 
A morphism $(R,h) \to (R',h')$ is a map of $k$-algebras $f\: R \to R'$ such that
$h' = f\circ h$. 
\end{defn}

\begin{rem} Using the same notation for the moduli space in each of the settings,
i.e., $k$-modules and spaces,  should not give rise to any confusion, since the subscript
clarifies the context.
\end{rem}

Fix a $k$-algebra $R$ and consider the $k$-algebra 
$R^e := R\smsh_k R^{\text{op}}$. A (left) $R^e$-module
is also known as an {\it $R$-bimodule}. In particular, $R$ is an $R$-bimodule.


\begin{defn} The {\it topological Hochschild cohomology} of $R$ with coefficients in an
$R$-bimodule $M$ is the homotopy function complex
\[
 \HH^\bullet(R;M) := {R^e\text{-}\Mod}(R,M)\, ,
\]
given by the hammock localization of $R$-bimodule maps from $R$
to $M$ appearing in \cite[3.1]{DK1}.\footnote{Alternatively, one can define $\cal \HH^\bullet(R;M)$ as 
derived simplicial hom, that is 
the space of maps $R^c \to M^f$ in which $R^c$ is a cofibrant approximation of $R$
and $M^f$ is a fibrant approximation of $M$ in $R^e$-modules; 
here we are using a simplicial model structure
on $R^e$-modules to define mapping spaces.} 
\end{defn}

Suppose now that $1\: R \to A$ is a homomorphism of $k$-algebras. Then
$A$ is an $R$-bimodule, and $1\: R \to A$ is a map of $R$-bimodules.
In this way $\HH^\bullet(R;A)$ inherits a distinguished basepoint.
Furthermore, if $f\: R'\to R$ is a $k$-algebra map, then the induced
map $f^\ast\: \HH^\bullet(R;A) \to \HH^\bullet(R';A)$ is a map of based spaces.
In particular, the homomorphism $R^e \to R$ induces a map $\HH^\bullet(R;A) \to \HH^\bullet(R^e;A) \simeq \Omega^\infty A$. Let $m\: R^e \to R$ be the multiplication.
Then the composite $1\circ m\: R^e\to A$ is an $R^e$-module map so 
there is a distinguished basepoint of $\HH^\bullet(R^e;A)$ which corresponds to the unit of 
$\Omega^\infty A$.

\begin{defn} For a $k$-algebra homomorphism $1\: R \to A$, the {\it reduced topological Hochschild cohomology} is defined to be the homotopy fiber
\[
D^\bullet(R;A) := \text{hofiber}( \HH^\bullet(R;A) \to  \HH^\bullet(R^e;A))
\]
taken at the distinguished basepoint. 
 \end{defn}

Let 
\[
T\: k\text{-}\Mod \to k\text{-}\Alg
\] 
be the {\it free functor} (i.e., the tensor algebra); this is left adjoint to the forgetful
functor $k\text{-}\Alg \to k\text{-}\Mod$, so if $R\in k\text{-}\Alg$ is an object, we have a $k$-algebra
map $TR \to R$. In particular, $R$ has the structure of a $TR$-bimodule. 

\begin{bigthm}\label{bigthm:main} Assume
$R \in k\text{-}\Mod$ is equipped with the structure of an $k$-algebra.  
Then the map
$
D^\bullet(R;R) \to 
D^\bullet(TR;R)
$
has a preferred (non-connective) double delooping 
 $\cal B^2D^\bullet(R;R) \to 
\cal B^2D^\bullet(TR;R)$
which sits in a homotopy fiber sequence of based spaces
\[
\cal M_R \to \cal B^2D^\bullet(R;R) \to 
\cal B^2D^\bullet(TR;R)\, .
\]
\end{bigthm}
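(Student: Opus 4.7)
The proof strategy parallels that of Addendum~\ref{bigadd:spaces}, replacing based spaces and topological monoids with $k\text{-}\Mod$ and $k\text{-}\Alg$.  Introduce two auxiliary moduli spaces.  Let $\cal M^{\text{alg}}_R$ denote the classifying space of the category whose objects are $k$-algebras weakly equivalent to $R$ in $k\text{-}\Mod$ (with no equivalence specified) and whose morphisms are weak equivalences of $k$-algebras; let $\cal M^{\text{mod}}_R$ be the analogous classifying space of $k$-modules weakly equivalent to $R$.  The forgetful functor $k\text{-}\Alg\to k\text{-}\Mod$ induces a natural map
\[
F\: \cal M^{\text{alg}}_R \longrightarrow \cal M^{\text{mod}}_R.
\]
An application of Quillen's Theorem~B identifies the homotopy fiber of $F$ at the basepoint $R$ with the nerve of the category $\cal C_R$ of pairs $(S, h\: R\to S)$ in which $S$ is a $k$-algebra and $h$ is a $k$-module weak equivalence --- that is, $\text{hofib}_R(F)\simeq \cal M_R$ essentially by definition.

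The technical heart of the proof is to construct preferred weak equivalences
\[
\cal M^{\text{alg}}_R \simeq \cal B^2 D^\bullet(R;R), \qquad \cal M^{\text{mod}}_R \simeq \cal B^2 D^\bullet(TR;R)
\]
that intertwine $F$ with a double delooping of $D^\bullet(R;R)\to D^\bullet(TR;R)$.  By the Dwyer--Kan description of moduli, the component of each of these spaces at $R$ is the classifying space of homotopy automorphisms in the respective model category, so it suffices to produce natural equivalences $\Omega^2 \cal M^{\text{alg}}_R \simeq D^\bullet(R;R)$ and $\Omega^2 \cal M^{\text{mod}}_R \simeq D^\bullet(TR;R)$ and promote them to non-connective deloopings via the moduli construction itself.

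For the algebra side I would use the topological cotangent sequence of $R^e$-modules $L_{R/k}\to R^e\to R$ (the homotopy fiber of the multiplication map).  Applying $\text{map}^h_{R^e}(-,R)$ produces a fiber sequence
\[
\HH^\bullet(R;R) \to \HH^\bullet(R^e;R) \to \text{Der}_k(R;R),
\]
so $D^\bullet(R;R)\simeq \Omega\,\text{Der}_k(R;R)$.  The trivial square-zero extension $R\oplus R$ identifies $\text{Der}_k(R;R)$ with the homotopy fiber over the identity of $\text{Map}^h_{k\text{-}\Alg}(R, R\oplus R)\to \text{Map}^h_{k\text{-}\Alg}(R, R)$, exhibiting the derivation space as the formal tangent space to the $k$-algebra endomorphism space at the identity; a second loop then realises $D^\bullet(R;R)$ as $\Omega^2 \cal M^{\text{alg}}_R$.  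The module identification is parallel but simpler: using the free/forget adjunction $\text{Map}^h_{k\text{-}\Alg}(TR,-)\simeq \text{Map}^h_{k\text{-}\Mod}(R,-)$ and the cotangent sequence for the free algebra $TR$, one obtains $\Omega^2 \cal M^{\text{mod}}_R \simeq D^\bullet(TR;R)$.

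The main obstacle is in this last stage: one must verify that the two double-delooping identifications fit into a strictly commutative square with $F$ on one side and the delooping of $D^\bullet(R;R)\to D^\bullet(TR;R)$ on the other, which requires careful bookkeeping of basepoints, homotopies, and the comparison map between cotangent complexes of $R$ and $TR$.  The cleanest way to organise this is to work throughout in a simplicial or $\infty$-categorical setting, where the Hochschild cohomology spectra appear directly as shifted tangent complexes at $R$ in the respective moduli and the forgetful comparison is natural.  Once this compatibility is in place, the fiber sequence claimed in the statement is obtained by combining the identification of $\text{hofib}_R(F)$ from the first stage with the double-delooping equivalences from the second.
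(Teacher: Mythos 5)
Your overall architecture coincides with the paper's: your two auxiliary moduli spaces are exactly the middle and right terms of the paper's homotopy fiber sequence of categories \eqref{eqn:fibering-rings}, the identification of the homotopy fiber of the forgetful map with $\cal M_R$ is the same (the paper packages this with explicit contracting data rather than Quillen's Theorem~B, a cosmetic difference), and the Dwyer--Kan reduction of each component to a classifying space of homotopy automorphisms, together with the free/forget adjunction $k\text{-}\Alg(TR,-)\simeq k\text{-}\Mod(R,-)$ on the module side, is precisely Lemma~\ref{lem:HH-delooped}.

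The genuine gap is in the step you describe as ``a second loop then realises $D^\bullet(R;R)$ as $\Omega^2\cal M^{\text{alg}}_R$.'' What is actually needed there is the equivalence $D^\bullet(R;R)\simeq\Omega_1\,k\text{-}\Alg(R,R)$ (and its analogue for $TR\to R$), i.e.\ Theorem~\ref{bigthm:crux}. Your cotangent-sequence argument does give $D^\bullet(R;R)\simeq\Omega\Der(R,R)$ --- though note this already uses the non-trivial identification $\Der(R,M)\simeq R^e\text{-}\Mod(\Omega_{k\to R},M)$ of Proposition~\ref{prop:Lazarev}, which is a cited theorem rather than a definition --- and $\Der(R,R)={k\text{-}\Alg/R}(R,R\vee R)$ is indeed the ``tangent space'' you describe. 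But the loop space $\Omega_1\,k\text{-}\Alg(R,R)$ is, by adjunction \eqref{eqn:exponential}, the space ${k\text{-}\Alg/R}(R,\cal LR)$ with $\cal LR=F(S^1_+,R)$, so equating it with $\Omega\Der(R,R)\simeq\Der(R,\Sigma^{-1}R)={k\text{-}\Alg/R}(R,R\vee\Sigma^{-1}R)$ requires an equivalence of \emph{augmented $k$-algebras} $\cal LR\simeq R\vee\Sigma^{-1}R$. This is obvious on underlying modules but not multiplicatively: it is exactly Proposition~\ref{prop:free-loop}, which the paper singles out as the hardest point of the entire argument (Lazarev's published proof has gaps, and the paper substitutes an argument of Mandell comparing the bar constructions $B_{\Alg}TR\to B_{\Alg}R$). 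Without this input, ``tangent space equals loop space'' for the algebra endomorphism space is an unproved assertion, and your double-delooping identification --- hence the theorem --- does not follow.
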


\begin{rem} As {\it unbased} spaces, $\cal B^2D^\bullet(R;R)$
and $\cal B^2D^\bullet(TR;R)$ 
are constructed so as to depend only on the underlying $k$-module structure of $R$. 
However, the $k$-algebra structure
induces a preferred basepoint making $\cal B^2D^\bullet(R;R) \to 
\cal B^2D^\bullet(TR;R)$ into a based map.

The double delooping of $D^\bullet(R;R)$ is formally rigged 
so that its set of path components  gives us the
correct value of $\pi_0(\cal M_R)$. Hence, one cannot use
 Theorem \ref{bigthm:main} to compute $\pi_0(\cal M_R)$. 
It seems that the only sufficiently 
 general approach to computing path components is the obstruction
theory of  \cite{Robinson1}, \cite{Robinson2}, \cite{Angeltveit},
 which is tailored to making such computations.
\end{rem}

By taking the two-fold loop spaces and noticing that
the reduced cohomology spaces are obtained by fibering 
the corresponding unreduced cohomology over $\Omega^\infty R$ in each case, we
infer

\begin{bigcor} \label{cor:main}
There is a homotopy fiber sequence
\[
\Omega^2 \cal M_R \to {\HH}^\bullet(R;R) \to 
{\HH}^\bullet (TR;R)\, ,
\]
in which $\Omega^2\cal M_R$ is identified with homotopy fiber
at the basepoint of ${\HH}^\bullet (TR;R)$ that is
 associated with the $TR$-bimodule map $TR \to R$.
 \end{bigcor}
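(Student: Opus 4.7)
The plan is to deduce the corollary from Theorem \ref{bigthm:main} by looping twice and replacing reduced topological Hochschild cohomology with its unreduced version. Since $\cal B^2$ is by construction a non-connective double delooping, there are canonical equivalences $\Omega^2 \cal B^2 D^\bullet(R;R) \simeq D^\bullet(R;R)$ and $\Omega^2 \cal B^2 D^\bullet(TR;R) \simeq D^\bullet(TR;R)$. Applying $\Omega^2$ to the fiber sequence of Theorem \ref{bigthm:main} therefore produces a homotopy fiber sequence
\[
\Omega^2 \cal M_R \to D^\bullet(R;R) \to D^\bullet(TR;R).
\]

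To pass from $D^\bullet$ to $\HH^\bullet$, I would assemble the commutative diagram
\[
\xymatrix{
D^\bullet(R;R) \ar[r] \ar[d] & D^\bullet(TR;R) \ar[d] \\
\HH^\bullet(R;R) \ar[r] \ar[d] & \HH^\bullet(TR;R) \ar[d] \\
\HH^\bullet(R^e;R) \ar[r] & \HH^\bullet((TR)^e;R)
}
\]
whose columns are the defining homotopy fiber sequences for $D^\bullet(R;R)$ and $D^\bullet(TR;R)$. The discussion preceding the definition of $D^\bullet$ identifies each term of the bottom row with $\Omega^\infty R$, sending each distinguished basepoint to the unit. Granted that the bottom row is an equivalence (equivalently, the identity of $\Omega^\infty R$), a standard comparison of fiber sequences shows that the top and middle horizontal maps have the same homotopy fiber at the basepoint. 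Combining with the first step produces the asserted fiber sequence
\[
\Omega^2 \cal M_R \to \HH^\bullet(R;R) \to \HH^\bullet(TR;R),
\]
and identifies $\Omega^2 \cal M_R$ with the homotopy fiber over the basepoint of $\HH^\bullet(TR;R)$ coming from the $TR$-bimodule map $TR \to R$.

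The only non-formal step is the claim that the bottom row of the $3 \times 3$ diagram is a weak equivalence. This amounts to naturality of the equivalence $\HH^\bullet(A^e;R) \simeq \Omega^\infty R$ as $A$ ranges over the $k$-algebras $R$ and $TR$: in each case the equivalence is evaluation of a bimodule map at the unit, and the map $(TR)^e \to R^e$ induced by the counit $TR \to R$ preserves the unit on the nose, so the induced self-map of $\Omega^\infty R$ is the identity. The remainder is formal diagram chasing together with careful bookkeeping of basepoints; accordingly, this naturality check is the main (and essentially only) obstacle once Theorem \ref{bigthm:main} is in hand.
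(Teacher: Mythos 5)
Your proposal is correct and follows essentially the same route as the paper, which disposes of the corollary in one sentence: loop the fiber sequence of Theorem \ref{bigthm:main} twice and observe that the reduced cohomology $D^\bullet(-;R)$ is obtained from $\HH^\bullet(-;R)$ by fibering over $\Omega^\infty R$ in both slots, so the horizontal maps have the same homotopy fiber. Your $3\times 3$ diagram and the naturality check on the bottom row merely make explicit what the paper leaves implicit.
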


\begin{rem} Corollary \ref{cor:main} is reminiscent of Lazarev's \cite[thm.~9.2]{Lazarev3}. However, the moduli space appearing there is different from ours:  
the points of Lazarev's moduli space are
$k$-algebras $R'$ whose weak homotopy type as a $k$-module is $R$, but $R'$ does not come equipped with a choice of $k$-module equivalence to $R$.
\end{rem}

The proof of Theorem \ref{bigthm:main} uses various identifications
of $D^\bullet(R;A)$ in the case when $A$ is the $R$-bimodule arising
from a $k$-algebra homomorphism $1\:R\to A$. One of the key identifications 
interprets $D^\bullet(R;A)$ as the loop space of the space of 
derived algebra maps $R\to A$:

\begin{bigthm} \label{bigthm:crux} 
There is a natural weak equivalence
\[
D^\bullet(R;A)\simeq \Omega_1 {k\text{-}\Alg}(R,A)  \, .
\]
where the right side is the based loop space at $1$ of the homotopy function
complex of $k$-algebra homomorphisms from $R$ to $A$.
\end{bigthm}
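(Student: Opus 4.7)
The plan is to identify both $D^\bullet(R;A)$ and $\Omega_1\,k\text{-}\Alg(R,A)$ with the space $\Omega \Der(R;A)$, where $\Der(R;A) := R^e\text{-}\Mod(I,A)$ is the derived space of $k$-linear derivations --- with $A$ regarded as an $R$-bimodule via $1 \: R \to A$ --- and $I$ denotes the homotopy fiber in $R^e$-modules of the multiplication $m \: R^e \to R$.

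For the left-hand side, I apply the contravariant functor $R^e\text{-}\Mod(-,A)$ to the fiber/cofiber sequence $I \to R^e \xrightarrow{m} R$ of $R$-bimodules to obtain a fiber sequence of based spaces
\[
\HH^\bullet(R;A) \to \HH^\bullet(R^e;A) \to \Der(R;A).
\]
Since $R^e$ is free of rank one as a module over itself, evaluation at the unit yields a weak equivalence $\HH^\bullet(R^e;A) \simeq \Omega^\infty A$, under which the first map becomes evaluation of an $R$-bimodule map at $1_R$. The distinguished basepoint of $\HH^\bullet(R^e;A)$ corresponds to $1_A \in \Omega^\infty A$ and maps to the zero derivation in $\Der(R;A)$, so $D^\bullet(R;A) = \text{hofib}(\HH^\bullet(R;A) \to \HH^\bullet(R^e;A))$ is identified, via the extended fiber sequence, with $\Omega \Der(R;A)$.

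For the right-hand side, I invoke the Beck-module / square-zero-extension formalism for structured ring spectra (due, in the $E_\infty$ setting, to Basterra--Mandell; the associative analogue is established by similar methods). For an $A$-bimodule $M$, the square-zero extension $A \ltimes M$ is a $k$-algebra with canonical augmentation to $A$, and the derived version of the classical correspondence between square-zero lifts and derivations yields a natural weak equivalence
\[
\text{hofib}_{1}\bigl(k\text{-}\Alg(R,\, A \ltimes M) \to k\text{-}\Alg(R,A)\bigr) \simeq \Der(R;M),
\]
where the fiber is taken over $1 \: R \to A$ and $M$ is viewed as an $R$-bimodule via $1$. Applied with $M$ chosen to implement the relevant path-object data for $A$ in $k\text{-}\Alg$ --- i.e., a loop object of $A$ in $A$-bimodules --- this identifies $\Omega_1\,k\text{-}\Alg(R,A)$ with $\Omega \Der(R;A)$, matching the previous paragraph.

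The main technical obstacle is the construction of the path object as a (suitably twisted) square-zero extension in the simplicial model structure on $k\text{-}\Alg$ --- accounting carefully for the two distinct evaluations that a path object must carry, which $A \ltimes M$ does not by itself supply --- together with the upgrade of the algebra-map / derivation correspondence from a bijection on $\pi_0$ to a weak equivalence of derived mapping spaces. This requires cofibrant replacement of $R$ in $k\text{-}\Alg$, fibrant replacement of $A$, and care with the simplicial enrichment. Once these pieces are in place, naturality and basepoint compatibility force the two identifications of $\Omega \Der(R;A)$ to agree, yielding the claimed equivalence $D^\bullet(R;A) \simeq \Omega_1\,k\text{-}\Alg(R,A)$.
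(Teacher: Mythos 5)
Your overall architecture coincides with the paper's: both sides are funneled through $\Omega\Der(R;A)$, the left-hand identification comes from applying $R^e\text{-}\Mod(-,A)$ to the (co)fiber sequence $\Omega_{k\to R}\to R^e\to R$ and rotating (this is exactly the paper's Corollary \ref{cor:reduced-hh-derivations}), and the right-hand identification rests on the square-zero-extension/derivation correspondence (the paper's Proposition \ref{prop:Lazarev}, due to Lazarev and Dugger--Shipley, together with Remark \ref{rem:extension}) plus the cotensor adjunction \eqref{eqn:exponential}. So far this is the same proof.

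The genuine gap is the step you defer to "the main technical obstacle": identifying the loop object of $A$ in $k\text{-}\Alg$ with a square-zero extension. The cotensor adjunction gives $\Omega_1\,k\text{-}\Alg(R,A)\cong k\text{-}\Alg/A(R,\cal LA)$ with $\cal LA=F(S^1_+,A)$, and what you then need is that $\cal LA$ is weakly equivalent, \emph{as an augmented $k$-algebra}, to the trivial square-zero extension $A\vee\Sigma^{-1}A$. This is not model-category bookkeeping (cofibrant/fibrant replacement, enrichment), and it does not follow from the Basterra--Mandell formalism: the underlying augmented $k$-module of $\cal LA$ is visibly $A\vee\Sigma^{-1}A$, but a priori the multiplication could realize a nontrivial extension, in which case $k\text{-}\Alg/A(R,\cal LA)$ would not compute $\Der(R;\Sigma^{-1}A)$. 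This is precisely Proposition \ref{prop:free-loop}, which the paper singles out as the hardest point of the whole argument; the published proof by Lazarev is flawed, and the paper's proof (due to Mandell) reduces to the case $A=S$ and then runs a genuine computation with the algebraic bar construction --- showing $B_{\Alg}(\cal LS)\simeq\bigvee_{j\ge 0}S$ via a degenerating spectral sequence and that the inclusion of the $1$-skeleton splits --- to produce the required multiplicative equivalence. Until you supply an argument of this kind, the assertion that "$M$ can be chosen to implement the path-object data" is exactly the unproved content of the theorem.
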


\begin{out} Section \ref{sec:prelim} is mostly
 language. In section \ref{sec:spaces}
we give the proof of Theorem \ref{bigthm:spaces} and Addendum \ref{bigadd:spaces}.
This section is independent of the rest of the paper, and we view it as
motivation for the $A_\infty$-ring case. Section \ref{sec:crux} is the meat
of the paper. The hardest part is to establish an augmented 
equivalence of $A_\infty$-rings between
the trivial square zero extension $S \vee S^{-1}$ and the Spanier-Whitehead dual
of $S^1_+$ (this appears in Proposition \ref{prop:free-loop}). In section \ref{sec:main}
we deduce Theorem \ref{bigthm:main}. Section \ref{sec:augment} develops an augmented version of
Theorem \ref{bigthm:main}. In section \ref{sec:examples} we study the homotopy
type of the moduli space
in two examples: the trivial square zero extension $S \vee S^{-1}$ and the commutative 
group ring case $S[G]$.
\end{out}

\begin{acks} The authors thank Dan Dugger for discussions connected with
\S\ref{sec:crux}. We are especially indebted to 
Bill Dwyer and Mike Mandell for helping us with the proof
of Proposition \ref{prop:free-loop}. We also learned a good deal from discussions with Andrew Salch.
\end{acks}

\section{Preliminaries \label{sec:prelim}}

\subsection*{Categories}
Let $\cal C$ be a category. If $X,Y \in \cal C$ are objects, we let 
$\hom_{\cal C}(X,Y)$ denote the set of arrows from $X$ to $Y$.

We let $|\cal C|$ be the (geometric) realization of (the nerve of) $\cal C$, i.e., the 
classifying space. 
Many of the categories $\cal C$ considered in this paper are not small. As usual, in order to avoid set theoretic difficulties and have a well-defined homotopy type $|\cal C|$, one has to make certain modifications, such as
working in a Grothendieck universe. We will implicitly assume this has been done.

We say that a functor $f\: \cal C \to \cal D$ is a {\it weak equivalence}
if it induces a homotopy equivalence on realizations. We say that 
a composition of functors
\[
\cal C @> f >> \cal D @> g >> \cal E
\]
forms a homotopy fiber sequence if after realization if there is
a preferred choice of based, contractible space
$U$ together with a commutative diagram
\[
\xymatrix{
|\cal C| \ar[r]^{|f|} \ar[d] & |\cal D| \ar[d]^{|g|} \\
U \ar[r] & |\cal E|}
\]
which is homotopy cartesian. In this paper, $U = |\cal U|$ for
a suitable pointed category $\cal U$, and the diagram arises from a commutative
diagram of functors. We call $\cal U$ the {\it contracting data.}

\subsection*{Model categories}

The language of model categories will be used throughout the paper.
If $\cal C$ is a model
category, we let 
\[
{\cal C}(X, Y)
\]
denote the {\it homotopy function complex} from $X$ to $Y$, where we use the specific model given by the {\it hammock localization} of Dwyer and Kan \cite[3.1]{DK1}. In particular,
any zig-zag
of the form
\[
X = X_0 @< \sim << X_1 @>>> X_2 @< \sim << \cdots @>>> X_{n-1} @< \sim << X_n = Y
\]
represents a point of ${\cal C}(X, Y)$.
If $\cal C$ is a simplicial model
category (in the sense of Quillen \cite[chap.~2,\S1]{Quillen}), $X$ is cofibrant and $Y$ is fibrant, then ${\cal C}(X, Y)$
has the homotopy type of the simplicial function space $F_{\cal C}(X,Y)$.

Let ${\cal C}$ be a model category. If  $X\in \cal C$ is an object, let $h{\cal C}(X)$ 
denote the category
consisting of all objects weakly equivalent to $X$, in which a morphism
is a weak equivalence of ${\cal C}$.
An important result used in this paper, due to Dwyer and Kan \cite[prop.~2.3]{DK2} is
the weak equivalence of based spaces 
\begin{equation} \label{eqn:DK-equivalence}
|h\cal C(X)| \simeq B h \Aut(X)\, ,
\end{equation}
where  $h \Aut(X)$ is the simplicial monoid  of homotopy automorphisms of $X$, 
which is the union of those components of ${h\cal C}(X,X)$
that are invertible in the monoid $\pi_0({h\cal C}(X,X))$.

As mentioned above,
there are simplicial model category structures on 
$k\text{-}\Mod$ and $k$-$\Alg$  (see Schwede and Shipley \cite{Schwede-Shipley}). 
In each case the fibrations
and weak equivalences are determined by the forgetful functor to symmetric spectra,
and the cofibrations are defined by the left lifting property with respect to the acyclic fibrations.

\subsection*{Spaces} Let $\Top$ be the category of compactly generated weak
Hausdorff spaces. When taking products, we always mean in the compactly generated sense.
 Function spaces are to be given the compactly generated, compact-open topology. 
 There is a well-known simplicial model category structure
 on $\Top$ in which a fibration is a Serre fibration,
 a weak equivalence is a weak homotopy equivalence and a cofibration is defined by
 the left lifting property with respect to the acyclic fibrations. Similarly,
 $\Top_\ast$, the category of based spaces, is a model category where the
 weak equivalences, fibrations and cofibrations are given by applying the forgetful
 functor to $\Top$.

\section{Proof of Theorem \ref{bigthm:spaces} and Addendum \ref{bigadd:spaces}
\label{sec:spaces}}

\subsection*{Interpretation of the moduli space}
Let $X$ be a cofibrant based space.
Let $JX$ be the free monoid on the points of $X$. Then $JX$ is the topological
monoid object of $\Top$ given by
\[
X \cup X^{\times 2} \cup \cdots \cup X^{\times n} \cup \cdots 
\]
where a point in $X^{\times n}$ represents a word of length $n$.
Multiplication is defined by word amalgamation and 
the identifications are given by reducing words, i.e., dropping the basepoint
of $X$
whenever it appears.

The moduli space $\cal C_X$ has an alternative definition as the category whose objects are
pairs 
\[
(M,h)
\]
in which $h\: JX \to M$ is a (topological) monoid homomorphism which restricts
to a weak homotopy equivalence $X \to M$. A map $(M,h) \to (M',h')$
is a homomorphism $f\: M \to M'$ such that $h' = f\circ h$.
There is then a decomposition
\[
\cal C_X = \coprod_{[N,h]} C_X(N,h)
\]
where $[N,h]$ runs through the components of $\cal C_X$, and
$C_X(N,h)$ denotes the component of $\cal C_X$ given by $[N,h]$.

Let $\Mon$ be the category of topological monoid
objects of $\Top_\ast$. Then $\Mon$ inherits the structure of a simplicial
model category in which the weak equivalences and the fibrations are 
defined by the forgetful functor $\Mon \to \Top_\ast$, and  cofibrations are defined by the left lifting property with respect to the acyclic fibrations (this follows
from \cite{Schwede-Shipley}, as well as
 \cite{Schwaenzl-Vogt}).
Cofibrant objects are retracts of those
objects which are built up from the trivial monoid by sequentially attaching 
free objects, where a free object is of the form $JY$. Every object is fibrant.

If $M \in \Mon$ is an object, then we can form the under category
$M\backslash\Mon$ whose objects are pairs $(N,h)$ in which $h\: M \to N$ is a monoid map. 
A morphism $(N,h) \to (N',h')$ is a monoid map $f\: N\to N'$ such that $h' = fh$.
By Quillen \cite[chap.~2, prop.~6] {Quillen}  
$M\backslash\Mon$ forms a simplicial model category in which a fibration, cofibration
and weak equivalence are defined by the forgetful functor. When there is no confusion
 we simplify 
the notation and
 drop the structure map when referring to an object: $N$ henceforth refers to $(N,h)$. 

Let $h\Mon \subset \Mon$ be the category of weak equivalences and let
$N \in M\backslash \Mon$ be a cofibrant object, where $M\backslash \Mon$
is the (comma) category of objects of $\Mon$ equipped map from $M$ . By the 
Dwyer-Kan equivalence \eqref{eqn:DK-equivalence}, there is a homotopy equivalence
\[
|M\backslash h\Mon(N)| \simeq Bh\Aut(N\text{ rel } M)\,  ,
\]
where the right side is the simplicial monoid of homotopy automorphisms
of $N$ relative to $M$. 

We now specialize to the case where $M = JX$,
$N$ is cofibrant, and the composite $X \to JX \to N$ is a weak equivalence of based spaces.
We claim that there is a homotopy fiber sequence of based spaces
\begin{equation} \label{eqn:fiber-sequence}
Bh\Aut(N\text{rel } JX) \to Bh\Aut_{\Mon}(N) \to Bh\Aut_{\Top_\ast}(N)\, ,
\end{equation}
where each map is a forgetful map. The claim can
be established as follows: 
consider the forgetful maps of function spaces
\[
F_{\Mon}(N,N;\text{ rel } JX) \to F_{\Mon}(N,N) \to F_{\Top_\ast}(N,N) \, ,
\]
These maps form a homotopy fiber sequence of topological monoids (here we use the fact that 
$F_{\Top_\ast}(N,N) \simeq F_{\Mon}(JX,N)$). Taking homotopy invertible components
yields a  homotopy fiber sequence of homotopy automorphisms. The fiber sequence
\eqref{eqn:fiber-sequence} is then obtained by taking classifying spaces.

We may enhance the homotopy fiber sequence \eqref{eqn:fiber-sequence} 
to another one as follows:
\begin{equation} \label{eqn:enhanced}
\coprod_{[N,h]} Bh\Aut(N\text{rel } JX)  \to 
\coprod_{[N]} Bh\Aut_{\Mon}(N) 
\to Bh\Aut_{\Top_\ast}(X)\, .
\end{equation}
Here, the middle term is a disjoint union over the components of 
$h\Mon$ whose objects have underlying space weakly equivalent to $X$, and
the first term is the disjoint union over the components of $\cal C_X$.
Furthermore, the Dwyer-Kan equivalence 
\eqref{eqn:DK-equivalence} gives an identification
\[
|\cal C_X(N,h)|    \simeq  Bh\Aut(N\text{ rel } JX)\, .
\]
Consequently, the homotopy fiber sequence
\eqref{eqn:enhanced} arises from the homotopy fiber sequence of categories
\begin{equation} \label{eqn:enhanced-even-more}
\cal C_X  \to  \coprod_{[N]} h\Mon(N)  \to h\Top_\ast(X)\, ,
\end{equation}
where the middle term is
a coproduct indexed over the components of $\Mon$ which
have the weak homotopy type of $X$. The 
functors appearing in \eqref{eqn:enhanced-even-more} are  the forgetful functors.
In \eqref{eqn:enhanced-even-more}, we can take the contracting data $\cal U$ 
to be the  category whose objects are pairs $(Y,h)$ in which $Y$ is a based space
and $h\: X \to Y$ is a weak equivalence, where a morphism $(Y,h) \to (Y',h')$ is a
map $f\: Y \to Y'$ such that $h' = f\circ h$. Clearly,
$(X,\text{id})$ is an initial object so $\cal U$ is contractible. The functor
$\cal U \to h\Top_\ast(X)$ is the forgetful functor defined by $(Y,h) \mapsto Y$, and the functor $\cal C_X \to \cal U$ is the forgetful functor defined by the inclusion.

\begin{lem} \label{lem:hochschild}  Assume $X$ is a connected cofibrant based space which is
equipped the structure of a topological monoid. Then there are  weak equivalences
\[
\Omega^2 |h\Top(X)| \,\,  \simeq \,\, \tilde{\cal H}(\Sigma X,BX) \, .
\]
and
\[
\Omega^2 |h\Mon(X)| \,\,  \simeq \,\, \tilde{\cal H}(BX,BX) \, .
\]
\end{lem}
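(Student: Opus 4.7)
The plan is to combine the Dwyer--Kan equivalence \eqref{eqn:DK-equivalence} with a double-delooping identification for grouplike topological monoids, and then reinterpret based loops in a mapping space as a space of factorizations. Throughout write $\cal C$ for either $\Top_\ast$ or $\Mon$.

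\textbf{Stage 1 (reduce to function complexes).} By \eqref{eqn:DK-equivalence} there are weak equivalences $|h\cal C(X)| \simeq B h\Aut_{\cal C}(X)$. By construction $h\Aut_{\cal C}(X)$ is the union of the homotopy invertible components of $F_{\cal C}(X,X)$, so it is grouplike and the canonical map $h\Aut_{\cal C}(X) \to \Omega B h\Aut_{\cal C}(X)$ is a weak equivalence. Looping once more at the identity gives
\[
\Omega^2 |h\cal C(X)| \;\simeq\; \Omega_{\id}\, F_{\cal C}(X,X).
\]

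\textbf{Stage 2 (delooping the target).} Because $X$ is a connected topological monoid it is grouplike, hence the unit $X \to \Omega BX$ is a weak equivalence. For $\cal C = \Top_\ast$, the adjunction $\Sigma \dashv \Omega$ then produces
\[
F_{\Top_\ast}(X,X) \;\simeq\; F_{\Top_\ast}(X,\Omega BX) \;\simeq\; F_{\Top_\ast}(\Sigma X, BX),
\]
under which $\id_X$ corresponds to the canonical inclusion $\iota\: \Sigma X \to BX$. For $\cal C = \Mon$, the classifying space functor $B$ induces a weak equivalence
\[
F_{\Mon}(X,X) \;\simeq\; F_{\Top_\ast}(BX, BX),
\]
carrying $\id_X$ to $\id_{BX}$; this rests on the fact that $B$ is a right Quillen equivalence from grouplike monoids to pointed connected spaces, with quasi-inverse $\Omega$.

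\textbf{Stage 3 (loops as factorizations).} For a based map $f \: Z \to Y$ of based spaces, a based loop at $f$ in $F_{\Top_\ast}(Z,Y)$ is a basepoint-preserving homotopy $Z \times S^1 \to Y$ whose restrictions at $0$ and $1$ equal $f$. Currying exhibits it as a based map $Z \to LY$ (where $LY$ has the constant-loop basepoint) that factors $f$ through the evaluation fibration $LY \to Y$. By definition such factorizations are the points of $\tilde{\cal H}(Z,Y)$, so
\[
\Omega_f\, F_{\Top_\ast}(Z,Y) \;\simeq\; \tilde{\cal H}(Z,Y).
\]
Applying this with $(Z,Y,f) = (\Sigma X, BX, \iota)$ and with $(Z,Y,f) = (BX, BX, \id)$, and combining with Stages 1 and 2, yields the two claimed equivalences.

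\textbf{Main obstacle.} The delicate ingredient is the identification $F_{\Mon}(X,X) \simeq F_{\Top_\ast}(BX, BX)$ in Stage 2. One must verify that the classifying space construction induces a weak equivalence of derived mapping spaces on the components of homotopy automorphisms, which fails in general without grouplikeness; this is where the connectedness hypothesis on $X$ enters essentially.
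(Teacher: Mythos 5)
Your proposal is correct and follows essentially the same route as the paper: Dwyer--Kan to reduce to $\Omega_{\id}F_{\cal C}(X,X)$, the adjunction $\Sigma\dashv\Omega$ together with group-likeness of $X$ for the $\Top_\ast$ case, the comparison $F_{\Mon}(X,X)\simeq F_{\Top_\ast}(BX,BX)$ for the $\Mon$ case, and the unwinding of a based loop in a mapping space as a factorization through the free loop fibration. The only point of divergence is the justification of the key claim in your Stage 2: you assert it by appeal to the Quillen equivalence between group-like monoids and connected based spaces (with a small slip of handedness --- $B$ is left adjoint to Moore loops, not right), whereas the paper, while acknowledging that this model-categorical argument works, instead gives a self-contained induction over cofibrant monoids built by attaching free objects $JU$, using the adjunction $F_{\Mon}(JU,Y)\cong F_\ast(U,Y)$, James's identification $BJU\simeq\Sigma U$, and the facts that mapping out of pushouts gives pullbacks and that $B$ preserves homotopy pushouts. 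Since you correctly isolate this as the delicate ingredient and the route you cite is legitimate, there is no gap, but a complete write-up would either supply that cell-induction or a precise reference for the Quillen equivalence on derived function complexes.
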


\begin{proof} Since $|h\Top_\ast(X)| \simeq Bh\Aut_\ast(X)$ it suffices
to identify $\Omega^2Bh\Aut_\ast(X)$   with $\tilde{\cal H}(\Sigma X;BX)$.
Since $h\Aut_\ast(X)$ is group-like, we have
\[
\Omega^2Bh\Aut_\ast(X) \simeq \Omega_1 h\Aut_\ast(X) = \Omega_1 F_\ast(X,X)\, ,
\]
where $\Omega_1$ denotes loops taken at the identity, and $F(X,X)$ is the
function space of based maps self-maps of $X$. Hence,
\begin{align*}
\Omega_1 F_\ast(X,X)& \,\,  \simeq \,\,  \Omega_1 F_\ast(X,\Omega BX)\, ,\\
& \,\, \cong\,\, 
\Omega_1 F_\ast(\Sigma X, BX)\, ,\\
& \,\, \cong\,\, 
\tilde{\cal H}(\Sigma X,BX) \, ,
\end{align*}
giving the first part of the lemma.
To prove the second part, we require
\medskip

\noindent{\it Claim:}
For group-like topological monoids $X$ and $Y$, 
the classifying space functor induces a weak equivalence
of homotopy function complexes
\[
\Mon(X,Y) \simeq \Top_\ast(BX,BY)\, .
\]
\medskip

The claim can be proved using model category ideas,
using the Moore loop functor. For the sake of completeness, we sketch an alternative
low-tech argument here.
To prove the claim, it is enough to check the statement
when $X$ is cofibrant (in this instance, $\Mon(X,Y) \simeq F_{\Mon}(X,Y)$).  
It is not difficult to show that such an $X$ is a retract of an object
built up from a point by attaching free objects, where a free object is of the 
form $JU$, in which $U$ is a based space and $JU$ is the free monoid
on the points of $U$. By naturality, it is enough to check the statement when
$X$ itself is inductively built up by attaching free objects. One can now argue
by induction. The basis step is for the zero object $X = \ast$. In this
case the claim is trivial.

An auxiliary step is to check the claim for 
a free object $X = JU$.
Since $J$ is a left adjoint to the forgetful functor $\Mon \to \Top_\ast$,
the function space
of monoid maps $F_{\Mon}(X,Y)$ coincides with the function space
of based maps $F_\ast(U,Y)$. Since $Y$ is group-like,
we have 
\[
F_\ast(U,Y) \simeq F_\ast(U,\Omega BY) \cong F_\ast(\Sigma U,Y) \simeq F_\ast(BJU,Y)\, ,
\]
where we have used a theorem of James to identify $BJU$ with $\Sigma U$.
This concludes the auxiliary step. 

For the inductive step,
suppose $(D,S)$ is a cofibration pair of based spaces and
the claim is true for $X_0$ and let $JS \to X_0$ be a
monoid map. Let $X_1 = \colim(X_0\leftarrow JS \to JD)$.
Then, by 
the fact that (i) function spaces out of pushouts give rise
to pullbacks, and (ii) the classfying space functor preserves homotopy
pushouts, we infer that the claim is true for $X_1$.
This completes the proof sketch of the claim.

The claim implies $|h\Mon(X)| \simeq Bh\Aut_\ast(BX,BX)$, and
we infer
\begin{align*}
\Omega^2 |h\Mon(X)| &\,\,  \simeq \,\, \Omega^2 Bh\Aut_\ast(BX,BX)\, , \\
& \,\, \simeq \,\, \Omega_1 h\Aut_\ast(BX,BX)\, , \\
& \,\, \cong  \,\, \tilde{\cal H}(BX,BX)\, . \qedhere
\end{align*}
\end{proof}

\begin{proof}[Proof of Theorem \ref{bigthm:spaces} and Addendum \ref{bigadd:spaces}]
Using Lemma  \ref{lem:hochschild} and \eqref{eqn:enhanced-even-more}, there is 
a homotopy fiber sequence
\[
\Omega^2 \cal M_X \to \tilde{\cal H}(BX;BX) \to \tilde{\cal H}(\Sigma X;BX)\, .
\]
In the second and third terms, we can replace $\tilde{\cal H}$ by $\cal H$,
since in each case we are fibering over the same space $\cal H(\ast,BX) \simeq X$.

Lemma \ref{lem:hochschild} also shows that the realization of $\coprod_{[N]} h\Mon(N)$ defines a non-connective
double delooping of $\tilde{\cal H}(BX;BX)$ and $h\Top_\ast(X)$ is a (connective) double
delooping of $\tilde{\cal H}(\Sigma X;BX)$. The homotopy fiber sequence 
\eqref{eqn:enhanced-even-more} completes the proof.
\end{proof}

We end this section with a result about the relation between $A_\infty$-spaces
and topological monoids which gives further justification as to why our moduli space $\cal M_X$
really is a description of the moduli space of $A_\infty$-structures on $X$.
According to \cite{Schwaenzl-Vogt}, the category of $A_\infty$-spaces,
denoted here by $\Mon_{A_\infty}$, forms a simplicial
model category where a weak equivalence and a fibration are defined by the forgetful functor
to $\Top_\ast$, and cofibrations are defined by the left lifting property with respect
to the acyclic fibrations.

\begin{prop} \label{prop:A-infty=monoid}
Let $X$ and $Y$ be topological monoids, where $Y$ is group-like. Then the inclusion
of topological monoids into $A_\infty$ spaces induces
a weak equivalence of homotopy function complexes
\[
\Mon(X,Y) \,\, \simeq \,\, \Mon_{A_\infty}(X,Y) \, .
\]
\end{prop}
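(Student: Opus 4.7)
The plan is to upgrade the Claim embedded in the proof of Lemma~\ref{lem:hochschild} from topological monoids to $A_\infty$-spaces, and then to match the two incarnations of the function complex against a common classifying-space model.

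The Claim already supplies a natural weak equivalence
\[
B\colon \Mon(X,Y) \xrightarrow{\sim} \Top_\ast(BX,BY)
\]
whenever $Y$ is a group-like topological monoid. My task therefore reduces to establishing the $A_\infty$-analogue
\[
B\colon \Mon_{A_\infty}(X,Y) \xrightarrow{\sim} \Top_\ast(BX,BY)
\]
for any $A_\infty$-space $X$ and the same group-like $Y$. Since the classifying space functor $B$ commutes with the inclusion $\iota\colon \Mon \to \Mon_{A_\infty}$, the triangle formed by these two equivalences together with the comparison map $\iota_\ast$ immediately forces $\iota_\ast\colon \Mon(X,Y) \to \Mon_{A_\infty}(X,Y)$ to be a weak equivalence, which is the proposition.

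To prove the $A_\infty$-version I would follow the same inductive pattern used for the Claim. After cofibrant replacement in the Schw\"anzl--Vogt model structure on $\Mon_{A_\infty}$ \cite{Schwaenzl-Vogt}, one reduces to the case where $X$ is built inductively from the one-point space by attaching ``free cells'' $J_\infty U$, where $J_\infty$ denotes the left adjoint to the forgetful functor $\Mon_{A_\infty}\to \Top_\ast$. The trivial object is clear. For a free cell, adjointness yields
\[
F_{\Mon_{A_\infty}}(J_\infty U, Y) \;\cong\; F_\ast(U,Y) \;\simeq\; F_\ast(U,\Omega BY) \;\cong\; F_\ast(\Sigma U,BY),
\]
where the middle step is precisely where the group-like hypothesis on $Y$ enters. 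The inductive step over cell attachments succeeds because homotopy function complexes convert pushouts to pullbacks and $B$ preserves homotopy pushouts of $A_\infty$-spaces.

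The main obstacle I expect is the $A_\infty$-analogue of James's theorem, namely $BJ_\infty U \simeq \Sigma U$, which is needed to identify the last term above with $F_\ast(BJ_\infty U, BY)$ and thereby close the free-cell step. The cleanest route is to invoke Boardman--Vogt rigidification to produce a natural weak equivalence of $A_\infty$-spaces $J_\infty U \simeq JU$, and then combine the classical identification $BJU \simeq \Sigma U$ with the homotopy invariance of $B$.
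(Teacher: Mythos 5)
Your proposal is correct, and its skeleton --- the two-out-of-three argument applied to the triangle
\[
\Mon(X,Y) \to \Mon_{A_\infty}(X,Y) \to \Top_\ast(BX,BY)
\]
with the composite identified as the equivalence supplied by the Claim in the proof of Lemma \ref{lem:hochschild} --- is exactly the paper's argument. The one place you diverge is in how the second leg is handled: the paper simply cites Fuchs \cite[7.7]{Fuchs} and Boardman--Vogt \cite[prop.~1.6]{Boardman-Vogt} for the fact that the classifying space functor induces a weak equivalence $\Mon_{A_\infty}(X,Y) \simeq \Top_\ast(BX,BY)$, whereas you propose to re-prove this from scratch by transplanting the cellular induction of the Claim into the Schw\"anzl--Vogt model structure on $\Mon_{A_\infty}$. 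Your route is self-contained but costs you exactly the package you identify as the ``main obstacle'': the identification $BJ_\infty U \simeq \Sigma U$ (which does follow, since the natural map $J_\infty U \to JU$ from the free $A_\infty$-space to the free monoid is a weak equivalence on well-pointed $U$ by contractibility of the Stasheff polytopes, and then James applies), plus two points you leave implicit --- homotopy invariance of the $A_\infty$ bar construction under the cofibrant replacement of the strict monoid $X$ inside $\Mon_{A_\infty}$, and the compatibility of all these identifications with the map actually induced by $B$. Both arguments invoke the group-like hypothesis on $Y$ in the same single place, namely $Y \simeq \Omega BY$; neither needs it anywhere else, consistent with the remark following the proposition that the hypothesis can in fact be dropped.
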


\begin{proof}   Consider the composition
\[
\Mon(X,Y) \to \Mon_{A_\infty}(X,Y) \to \Top_\ast(BX,BY)\, .
\]
According to \cite[7.7]{Fuchs}, \cite[prop.~1.6]{Boardman-Vogt}
the second map is a weak equivalence. By the claim appearing in 
the proof of Lemma \ref{lem:hochschild}, the composition is a weak equivalence. It follows that the first map is a weak equivalence.
\end{proof}

\begin{rem} Dylan Wilson pointed out to us that
 Proposition \ref{prop:A-infty=monoid} 
 is still true without the group-like condition on $Y$. See 
\cite[thm.~4.1.4.4, prop.~4.1.2.6]{Lurie}.
\end{rem}

\section{Proof of Theorem \ref{bigthm:crux} \label{sec:crux}}

\subsection*{Universal differentials and Derivations}
Following Lazarev, we define the $R$-bimodule of {\it universal differentials}
\[
\Omega_{k\to R}
\]
to be the homotopy fiber of the multiplication map 
\[
R \smsh_S R^{\text{op}} \to R
\]
in the model category $R^e\text{-}\Mod$.
From now on we will assume that $R$ is both fibrant and cofibrant.

For an $R$-bimodule $M$, we consider the trivial square zero extension 
$R \vee M$, which is a $k$-algebra. 
Projection onto the first summand is a morphism of $k$-algebras
$R \vee M \to R$ .
The category of $k$-algebra's over $R$, denoted $k\text{-}\Alg/R$, is a  model
category and $R \vee M$ is then an object of it.

The homotopy function complex
\[
\Der(R,M) := {k\text{-}\Alg/R}(R,R \vee M)
\]
is then defined. We make this into a based space using the inclusion $R\to R\vee M$.

\begin{rem}\label{rem:extension} Suppose that $1\: R \to A$ is a homomorphism of $k$-algebras and $M$ is 
an $A$-bimodule.  Then the diagram
\[
\xymatrix{
R\vee M \ar[r] \ar[d] & A \vee M\ar[d] \\
R \ar[r] & A
}
\]
is homotopy cartesian. We infer that there is a weak equivalence
\[
\Der(R;M) \simeq {k\text{-}\Alg/A}(R,A \vee M)\, .
\]
\end{rem}

\begin{lem} \label{lem:desuspend} There is a weak equivalence 
\[
\Der(R;\Sigma^{-1} M) \simeq \Omega\Der(R;M)\, .
\]
\end{lem}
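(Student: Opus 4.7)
The plan is to identify $R \vee \Sigma^{-1}M$ as a homotopy loop object of $R \vee M$ in the over category $k\text{-}\Alg/R$, and then invoke the general principle that derived mapping spaces convert homotopy pullbacks into homotopy pullbacks.

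First, I would exhibit the commutative square in $k\text{-}\Alg/R$
\[
\xymatrix{
R \vee \Sigma^{-1}M \ar[r] \ar[d] & R \ar[d] \\
R \ar[r] & R \vee M
}
\]
in which the two arrows out of $R \vee \Sigma^{-1}M$ are each the projection onto $R$, the two arrows into $R \vee M$ are each the unit inclusion, and every object is regarded as a $k$-algebra over $R$ by its evident projection (with $R$ itself serving as the terminal object). I claim this square is homotopy cartesian in $k\text{-}\Alg/R$. Since the forgetful functor $k\text{-}\Alg \to k\text{-}\Mod$ creates fibrations and weak equivalences, and thus homotopy pullbacks, it suffices to verify the claim in $k\text{-}\Mod$. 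There the square splits off the identity on $R$ to leave the evident homotopy cartesian square of spectra
\[
\xymatrix{
\Sigma^{-1}M \ar[r] \ar[d] & \ast \ar[d] \\
\ast \ar[r] & M,
}
\]
namely the defining fiber sequence for $\Sigma^{-1}M$.

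Next, I would apply the derived mapping space functor ${k\text{-}\Alg/R}(R,-)$. Because this functor preserves homotopy pullbacks, the outcome is a homotopy cartesian square of based spaces
\[
\xymatrix{
\Der(R; \Sigma^{-1}M) \ar[r] \ar[d] & {k\text{-}\Alg/R}(R,R) \ar[d] \\
{k\text{-}\Alg/R}(R,R) \ar[r] & \Der(R; M).
}
\]
The upper right and lower left corners are mapping spaces into the terminal object of $k\text{-}\Alg/R$, and are therefore contractible. Moreover, both maps into $\Der(R; M)$ are induced by post-composition with the unit inclusion $R \to R \vee M$, so they land at the distinguished basepoint. The homotopy pullback of two copies of a basepoint inside $\Der(R;M)$ is precisely $\Omega \Der(R; M)$, which gives the desired equivalence.

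The only genuinely delicate point is the homotopy cartesian claim of the first step; once the reduction to $k\text{-}\Mod$ is secured through the transferred model structure of \cite{Schwede-Shipley}, everything else is formal.
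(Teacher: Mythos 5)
Your proof is correct and is essentially the argument the paper gives: both identify $\Sigma^{-1}M$ as the homotopy pullback of $\ast \to M \leftarrow \ast$, use that $M \mapsto \Der(R;M)$ carries homotopy cartesian squares of bimodules to homotopy cartesian squares of spaces, and exploit the contractibility of ${k\text{-}\Alg/R}(R,R)$. You merely unpack the "preserves homotopy cartesian squares" step by verifying, via the forgetful functor to $k\text{-}\Mod$, that the square of trivial square-zero extensions is homotopy cartesian, which is a welcome elaboration rather than a different route.
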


\begin{proof} The functor $M \mapsto \Der(R;\Sigma^{-1} M)$ preserves homotopy cartesian
squares of bimodules. There is a homotopy cartesian square
\[
\xymatrix{
\Sigma^{-1} M \ar[r] \ar[d] & \ast \ar[d]\\
\ast \ar[r] &  M\, .
}
\]
Furthermore, it is evident that $\Der(R;\ast) = {k\text{-}\Alg/R}(R,R)$ is contractible. Hence, the map
\[
\Der(R,\Sigma^{-1} M) \to \holim(\ast \to \Der(R,M) \leftarrow \ast) \simeq \Omega \Der(R,M)
\]
is a weak equivalence.
\end{proof}

\begin{prop}[Lazarev \cite{Lazarev1}, Dugger-Shipley \cite{Dugger-Shipley}]  \label{prop:Lazarev} For any $R$-bimodule $M$, there is a weak equivalence 
\[
\Der(R,M) \simeq  {R^e\text{-}\Mod}(\Omega_{k\to R},M)\, .
\]
\end{prop}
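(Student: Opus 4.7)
The plan is to realize the claimed equivalence as the homotopical shadow of a Quillen adjunction, following the strategy of Dugger-Shipley. The trivial square-zero extension defines a functor
\[
R \vee (-) \colon R^e\text{-}\Mod \longrightarrow k\text{-}\Alg/R, \qquad M \mapsto (R \vee M \to R),
\]
which I would first verify is right Quillen: fibrations and weak equivalences in both categories are detected on the underlying spectrum, and both are visibly preserved by $R \vee (-)$. Its left adjoint $L$ is an ``abelianization'' functor that on the point-set level sends an augmented $k$-algebra $\varepsilon \colon A \to R$ to a homotopical model for the indecomposables of the augmentation ideal of $\varepsilon$, viewed as an $R^e$-module via $\varepsilon$.

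Granted this adjunction, standard Quillen-adjunction machinery (compatible with the hammock localization used to define the homotopy function complexes) yields a weak equivalence
\[
{k\text{-}\Alg/R}(R, R \vee M) \;\simeq\; {R^e\text{-}\Mod}\bigl(L(R^{\mathrm{cof}}), M\bigr)
\]
for any cofibrant replacement $R^{\mathrm{cof}} \to R$ of $R$ in $k\text{-}\Alg/R$. It then remains to identify $L(R^{\mathrm{cof}})$ with $\Omega_{k\to R}$. I would do this by choosing $R^{\mathrm{cof}}$ to be a two-sided bar-type resolution of $R$, for which $L$ manufactures an $R^e$-module that fits into the defining fiber sequence $\Omega_{k\to R} \to R^e \to R$.

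The principal obstacle is this last identification: carefully tracking the $R^e$-module structures through a cofibrant resolution of $R$ in $k\text{-}\Alg/R$ and matching them with the structure on the homotopy fiber of the multiplication map $R^e \to R$. The approach of Dugger-Shipley sidesteps the explicit bookkeeping by exhibiting a Quillen equivalence that transports the spectral cotangent complex of $R$ to an algebraic model where the comparison with $\Omega_{k\to R}$ is essentially built in. I would appeal to their machinery to complete the argument, so that my remaining task reduces to verifying that the Schwede-Shipley model structure used in this paper is compatible with the frameworks of \cite{Lazarev1} and \cite{Dugger-Shipley}.
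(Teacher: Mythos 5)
The first thing to note is that the paper does not prove this proposition: it is stated as an attributed result of Lazarev and Dugger--Shipley, and the remark immediately following it warns that Lazarev's published proof ``contains serious gaps'' which were only repaired by Dugger and Shipley using an unpublished result of Mandell. So there is no in-paper argument to measure yours against; the only comparison available is with the cited literature, and the paper's own treatment is a proof by citation.

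Your outline follows the standard strategy: the square-zero extension $M \mapsto R \vee M$ is right Quillen from $R^e\text{-}\Mod$ to $k\text{-}\Alg/R$, so $\Der(R,M) = {k\text{-}\Alg/R}(R, R\vee M)$ is corepresented in $R^e\text{-}\Mod$ by the derived left adjoint applied to $R$, and the theorem becomes the identification of that object with $\Omega_{k\to R} = \operatorname{hofib}(R\smsh_k R^{\mathrm{op}} \to R)$. You have correctly isolated where all of the difficulty lives --- the step you call ``the principal obstacle'' --- but you should recognize that this step is not bookkeeping that a well-chosen bar-type resolution dispatches; it is the entire mathematical content of the proposition, and it is exactly the point at which Lazarev's argument fails. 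Comparing the derived indecomposables of the augmentation ideal, computed from a cellular resolution of $R$ as a $k$-algebra, with the homotopy fiber of the multiplication map is genuinely delicate (this is why an unpublished result of Mandell is needed). As written, your proof therefore has a real gap at that step, which you then close by appealing wholesale to Dugger--Shipley --- at which point your argument, like the paper's, is a citation. That is a legitimate way to treat a quoted result, but the final paragraph of your proposal understates the issue by reducing what remains to ``verifying that the model structures are compatible.''
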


\begin{rem} Lazarev's proof of this statement contains serious gaps. The 
proof was corrected by Dugger and Shipley, based on an unpublished result of Mandell
(cf.\ \cite[rem.~8.7]{Dugger-Shipley}).
\end{rem}

Given a $k$-algebra homomorphism $1\: R \to A$, we can regard $A$ as an $R$-bimodule
in the evident way. 

\begin{cor}\label{cor:special-case-of-lazarev} There is a weak equivalence
\[
\Der(R,A) \simeq  {R^e\text{-}\Mod}(\Omega_{k\to R},A)\, .
\]
\end{cor}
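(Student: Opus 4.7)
The corollary is essentially an immediate specialization of Proposition \ref{prop:Lazarev}, so the plan is simply to verify that $A$ fits into that framework as an $R$-bimodule and then invoke the proposition with $M = A$.

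The plan is to first observe that the $k$-algebra homomorphism $1 \colon R \to A$ equips $A$ with the structure of an $R$-bimodule: explicitly, the induced map $R^e = R \smsh_k R^{\text{op}} \to A \smsh_k A^{\text{op}} = A^e$ followed by the multiplication $A^e \to A$ exhibits $A$ as a (left) $R^e$-module. Once this bimodule structure is in place, the objects and function complexes appearing on both sides of the claimed equivalence are defined.

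Next, I would simply apply Proposition \ref{prop:Lazarev} with the $R$-bimodule $M$ taken to be $A$. This yields the weak equivalence
\[
\Der(R, A) \simeq {R^e\text{-}\Mod}(\Omega_{k \to R}, A)
\]
directly. No further argument is required: the right-hand side is the homotopy function complex computed in $R^e\text{-}\Mod$, and the left-hand side is the space of derivations as defined just before the proposition, both interpreted using the $R$-bimodule structure on $A$ identified above.

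Since the content is inherited entirely from Proposition \ref{prop:Lazarev}, there is no real obstacle to overcome. The only thing worth noting is that, on the basepoint level, the distinguished point of $\Der(R, A)$ coming from $R \to R \vee A \to A$ corresponds under the equivalence to the zero derivation, i.e.\ the zero $R^e$-module map $\Omega_{k \to R} \to A$; this compatibility is automatic from naturality of the weak equivalence in Proposition \ref{prop:Lazarev} applied to the zero map $M \to M$, and it is what allows one to treat the equivalence as one of based spaces in later sections.
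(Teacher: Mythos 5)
Your proposal is correct and matches the paper exactly: the paper offers no separate proof of Corollary \ref{cor:special-case-of-lazarev}, treating it as the immediate specialization of Proposition \ref{prop:Lazarev} to the $R$-bimodule $A$ obtained from the homomorphism $1\colon R \to A$, which is precisely what you do. Your added remark about the basepoint correspondence is a reasonable (and harmless) elaboration that the paper leaves implicit.
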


Consider the fibration sequence
\[
\Omega_{k\to R} \to R \smsh_S R^{\text{op}} \to R \, .
\]
Apply ${R^e\text{-}\Mod}({-},A)$ to this sequence to get a homotopy fiber sequence 
\[
\HH^\bullet(R;A)\to \Omega^\infty A \to {R^e\text{-}\Mod}(\Omega_{k\to R},A)
\]
where we have identified the middle term with 
${R^e\text{-}\Mod}(R^e,A)$.
By shifting the homotopy fiber sequence once over to the left (using the unit component
of $\Omega^\infty A \simeq \HH^\bullet(R^e;A)$ as basepoint),
 we see that there's a weak equivalence
$$
\Omega{R^e\text{-}\Mod}(\Omega_{k\to R},A) \simeq D^\bullet(R;A)\, .
$$
If we combine this with Corollary \ref{cor:special-case-of-lazarev} and Lemma \ref{lem:desuspend}, we infer

\begin{cor} \label{cor:reduced-hh-derivations} There is a weak equivalence 
\[
D^\bullet(R;A) \simeq \Omega \Der(R,A)\, .
\]
\end{cor}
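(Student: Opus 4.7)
The strategy is to reduce the claim to the Lazarev--Dugger--Shipley identification of derivations with bimodule maps out of the universal differentials (Proposition \ref{prop:Lazarev}, together with its Corollary \ref{cor:special-case-of-lazarev}) and then extract $D^\bullet(R;A)$ by rotating the long fiber sequence obtained by mapping into $A$ the defining cofiber sequence of $\Omega_{k\to R}$. First I would apply the contravariant derived mapping functor ${R^e\text{-}\Mod}(-,A)$ to the homotopy cofiber sequence
\[
\Omega_{k\to R} \to R^e \to R
\]
of $R$-bimodules. This produces a homotopy fiber sequence of based spaces whose first term is $\HH^\bullet(R;A)$, whose middle term is ${R^e\text{-}\Mod}(R^e,A) \simeq \Omega^\infty A \simeq \HH^\bullet(R^e;A)$ via the free--forgetful adjunction, and whose third term is ${R^e\text{-}\Mod}(\Omega_{k\to R},A)$.

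Next, I would rotate this sequence once at the distinguished basepoint of the middle term, namely the $R^e$-module map $R^e \to A$ obtained by composing the multiplication $m\colon R^e \to R$ with the unit $1\colon R \to A$; under the adjunction equivalence this basepoint corresponds precisely to the unit of $\Omega^\infty A$. The resulting homotopy fiber on the left is by definition $D^\bullet(R;A)$, so we get a natural weak equivalence
\[
D^\bullet(R;A) \simeq \Omega\,{R^e\text{-}\Mod}(\Omega_{k\to R},A).
\]
Corollary \ref{cor:special-case-of-lazarev} now identifies the right-hand term with $\Omega\,\Der(R,A)$, giving the desired equivalence. As a sanity check, Lemma \ref{lem:desuspend} combined with Proposition \ref{prop:Lazarev} applied to the bimodule $\Sigma^{-1}A$ yields the same conclusion by a slightly different route.

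The main subtlety, and really the only step requiring genuine care, is verifying that the three distinguished basepoints in play (the inclusion $R \hookrightarrow R \vee A$ used to base $\Der(R,A)$, the zero map used to base ${R^e\text{-}\Mod}(\Omega_{k\to R},A)$, and the unit component of $\Omega^\infty A$) all match up under the Lazarev equivalence and the fiber-sequence rotation. Since each of them is induced by the single $k$-algebra homomorphism $1\colon R \to A$, this amounts to chasing through the adjunctions used in the construction of Proposition \ref{prop:Lazarev}; the content of the corollary, beyond that proposition, is entirely formal.
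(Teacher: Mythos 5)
Your argument is correct and is essentially identical to the paper's own proof: both map the (co)fiber sequence $\Omega_{k\to R} \to R^e \to R$ into $A$ via ${R^e\text{-}\Mod}(-,A)$, rotate once at the unit basepoint of $\Omega^\infty A \simeq \HH^\bullet(R^e;A)$ to identify $D^\bullet(R;A)$ with $\Omega\,{R^e\text{-}\Mod}(\Omega_{k\to R},A)$, and then invoke Corollary \ref{cor:special-case-of-lazarev} (with Lemma \ref{lem:desuspend} available for the same bookkeeping you describe). No gaps.
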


Let $A_1$ be the $k$-algebra
\[
A \vee \Sigma^{-1} A
\]
in which $\Sigma^{-1} A$ is given the evident $A$-bimodule structure making $A_1$
into a trivial square zero extension of $A$.
Also, let $\cal LA$ denote  the $k$-algebra given by the function spectrum
\[
F(S^1_+,A) \, ,
\]
taken in the category of symmetric spectra.
The multiplicative structure on $\cal LA$ arises from the 
multiplication on $A$ and the diagonal map $S^1 \to S^1 \times S^1$.
Then we have

\begin{prop} \label{prop:free-loop} There is a weak equivalence of augmented $k$-algebras 
\[
\cal L A \, \, \simeq \,\, A_1 \, .
\]
\end{prop}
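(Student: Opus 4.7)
The plan is to reduce the statement to the sphere case and then identify $D(S^1_+)$ explicitly as an augmented $A_\infty$-ring. First, since $S^1_+$ is a finite cell complex, the assembly map
\[
A \smsh_S D(S^1_+) \,\to\, F(S^1_+,A) \,=\, \cal LA
\]
is a weak equivalence, and it is compatible with the augmentations and multiplicative structures on both sides (where $D(S^1_+):=F(S^1_+,S)$ has its diagonal-induced commutative $S$-algebra structure, and the smash product of a $k$-algebra with a commutative $S$-algebra is naturally a $k$-algebra over $A$). It thus suffices to establish an augmented $A_\infty$-equivalence $D(S^1_+)\simeq S\vee S^{-1}$, the target bearing the trivial square-zero multiplication.

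The underlying spectrum is easy to identify. Choose a basepoint $\ast\in S^1$; then the pointed inclusion of $\{+,\ast\}\subset S^1_+$ gives a cofibration $S^0\hookrightarrow S^1_+$ which is split by the retraction $S^1_+\to S^0$ that collapses $S^1$ to the non-basepoint of $S^0$. The cofiber is $S^1_+/S^0\simeq S^1$, so dualizing yields a split fiber sequence $S^{-1}\to D(S^1_+)\to S$, identifying the underlying spectrum of $D(S^1_+)$ with $S\vee S^{-1}$ and its augmentation ideal with $S^{-1}$. To check that the induced binary multiplication on the augmentation ideal is null, note that the component $S^{-1}\smsh S^{-1}\to S^{-1}$ of the multiplication is Spanier--Whitehead dual to the composite
\[
S^1\,\hookrightarrow\, S^1_+\,\xrightarrow{\Delta}\, S^1_+\smsh S^1_+\,\twoheadrightarrow\, S^1\smsh S^1\,=\,S^2,
\]
whose last arrow is the projection onto the top summand of the Whitehead stable splitting of $S^1_+\smsh S^1_+$; this composite is null since $\pi_1^s(S^2)=0$. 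A parallel inspection of the other matrix entries of the dualized diagonal shows that under the splitting $D(S^1_+)\simeq S\vee S^{-1}$ the remaining components of the multiplication agree up to homotopy with the trivial square-zero formula.

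The main obstacle, and the reason the authors credit Dwyer and Mandell, is upgrading this binary nullhomotopy to a coherent $A_\infty$ (in fact $E_\infty$) nullhomotopy, so that the spectrum-level splitting can be promoted to an equivalence of augmented $A_\infty$-ring spectra. My preferred strategy would be to realize the equivalence in a concrete strict model: for example, one can model $S^1$ as the geometric realization of a small cocommutative simplicial coalgebra (such as $\Delta^1/\partial$), build $D(S^1_+)$ as the totalization of the corresponding cosimplicial $E_\infty$-algebra obtained by applying $F(-,S)$ levelwise, and then exhibit an explicit cofibrant replacement within augmented $A_\infty$-algebras whose augmentation ideal is strictly square-zero. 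A secondary strategy, should the explicit model approach prove too cumbersome, is an obstruction-theoretic argument: the successive higher coherence obstructions live in shifted stable homotopy groups of spheres (generalizing the $\pi_1^s(S^2)=0$ computation above), and one must show they can be chosen to vanish compatibly with the augmentation and with all lower coherences.
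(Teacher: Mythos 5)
Your reduction to the sphere case via the assembly map $A \smsh_S F(S^1_+,S) \to F(S^1_+,A)$ is exactly how the paper begins, and your identification of the underlying augmented $S$-module of $F(S^1_+,S)$ with $S \vee S^{-1}$, together with the nullity of the binary product on the augmentation ideal via $\pi_1^s(S^2)=0$, is correct. But the proof stops where the actual difficulty begins. You name the "main obstacle" --- promoting the additive splitting to an equivalence of augmented $A_\infty$-rings --- and then offer two strategies, neither of which is carried out and neither of which obviously works. Strategy A's decisive step ("exhibit an explicit cofibrant replacement within augmented $A_\infty$-algebras whose augmentation ideal is strictly square-zero") is a restatement of the goal, not an argument. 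Strategy B is worse off than you suggest: the coherence obstructions do \emph{not} all live in vanishing groups. With augmentation ideal $\bar{E}\simeq S^{-1}$, the obstruction to extending an $A_{n-1}$-equivalence to an $A_n$-equivalence lives in a group of the form $[\bar{E}^{[n]},\Sigma^{2-n}E]\cong \pi_2(S)\oplus\pi_1(S)$ and its shifts, which contain nontrivial $2$-torsion. This is precisely the "serious gap" the paper attributes to Lazarev's published argument, so asserting that the obstructions "can be chosen to vanish compatibly" is the entire content of the proposition, not a routine verification.

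The paper's proof (credited to Mandell) sidesteps coherence checks altogether by a global argument. Writing $R=\cal LS$, it studies the forgetful map $S\text{-}\Alg/S(R,S\vee S^{-1}) \to S\text{-}\Mod/S(R,S\vee S^{-1})$, which via Proposition \ref{prop:Lazarev}, Remark \ref{rem:extension} and Lemma \ref{lem:desuspend} sits in a homotopy fiber sequence over $R\text{-}\Mod(S,S)\to\Omega^\infty S$. The target of the weak-equivalence class one wants to lift is detected on $\pi_0$ of the module mapping space ($\cong\Bbb Z$), and the lift exists provided the map $R\text{-}\Mod(S,S)\to TR\text{-}\Mod(S,S)$ is surjective on $\pi_0$. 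This is then established by Koszul duality: $R\text{-}\Mod(S,S)\simeq S\text{-}\Mod(B_{\Alg}R,S)$, the bar construction $B_{\Alg}R$ is computed to split as $\bigvee_{j\ge 0}S$, and $B_{\Alg}TR\simeq S\vee S$ includes as the split $1$-skeleton, so the restriction map is a retraction up to homotopy. If you want to complete your proof, you need either to reproduce an argument of this kind or to genuinely compute the higher obstructions in the nonzero groups above; as written, the multiplicative half of the statement is assumed rather than proved.
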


\begin{proof} This is claimed by Lazarev {\cite[th.~4.1]{Lazarev2}}, but
we were unable to understand his argument.  
Fortunately, we were helped out by Mike Mandell, who explained
a different proof to us.
We sketch Mandel's argument below when $k = S$ and leave
the general case as an exercise for the reader. 

The $S$-algebra $\cal LS := 
F(S^1_+,S)$ is just the Spanier-Whitehead dual
of $S^1_+$ in the category of symmetric spectra; it 
has the structure of a (commutative) $S$-algebra (cf.\ \cite{Cohen}). 

The evident pairing
\[
A \smsh_S {\cal L}S^1 \to \cal LA
\]
is a weak equivalence of $S$-algebras. It is therefore enough to show that
$\cal LS^1 \simeq S \vee S^{-1}$ as $S$-algebras, since then
\[
\cal LA \simeq A \smsh_S \cal LS^1  \simeq A \smsh_S (S \vee S^{-1}) \cong A_1\, .
\]
Let $R$ be an augmented $k$-algebra. 
The idea of the remainder of the proof is to study the forgetful map 
\[
S\text{-}\Alg/S(R,S\vee S^{-1}) \to S\text{-}\Mod/S(R,S\vee S^{-1})\, ,
\]
which is just the map
\begin{equation} \label{eqn:forgetful-map}
S\text{-}\Alg/S(R,S\vee S^{-1}) \to S\text{-}\Alg/S(TR,S\vee S^{-1})\, ,
\end{equation}
induced by the algebra homomorphism $TR \to R$.

Using Proposition \ref{prop:Lazarev}, Remark \ref{rem:extension} and Lemma
\ref{lem:desuspend},
there is a homotopy fiber sequence
\begin{equation} \label{eqn:fibration-one}
S\text{-}\Alg/S(R,S\vee S^{-1}) \to R^e\text{-}\Mod(R,S) \to \Omega^\infty S\, ,
\end{equation}
where the displayed homotopy fiber is taken at the basepoint of $\Omega^\infty S$
given by the unit. Note that the $R^e$-module structure on $S$ arises
by augmentation, so an extension by scalars argument shows 
\begin{equation} \label{eqn:scalars}
R^e\text{-}\Mod(R,S) \simeq R\text{-}\Mod(S,S) \, .
\end{equation}
Combining the fiber sequence \eqref{eqn:fibration-one} with
this last identification yields a homotopy fiber sequence
\begin{equation} \label{eqn:fibration-two} 
S\text{-}\Alg/S(R,S\vee S^{-1}) \to R\text{-}\Mod(S,S) \to \Omega^\infty S\, .
\end{equation}
With respect to the homomorphism $TR\to R$, we obtain a diagram
\begin{equation}\label{eqn:crux-diagram-one}
\xymatrix{
S\text{-}\Alg/S(R,S\vee S^{-1}) \ar[r] \ar[d] & R\text{-}\Mod(S,S) \ar[d] \\
S\text{-}\Mod/S(R,S\vee S^{-1}) \ar[r] & TR\text{-}\Mod(S,S)
}
\end{equation}
which is homotopy cartesian by \eqref{eqn:fibration-two} 
(where $R$ replaced by $TR$ in \eqref{eqn:fibration-two} for the
bottom horizontal map of \eqref{eqn:crux-diagram-one}). Henceforth, we specialize to the case $R =\cal LS$ (but the argument below works equally well for any $k$-algebra
which is weakly equivalent to $S\vee S^{-1}$ as an augmented
$S$-module; cf.\ Remark \ref{rem:example} below).

Note that
\[
\pi_0(S\text{-}\Mod/S(R,S\vee S^{-1})) \cong \pi_0(S\text{-}\Mod(R,S^{-1}))
\cong \Bbb Z\, ,
\] 
since, as augmented $S$-modules, $R \simeq S \vee S^{-1}$.
Furthermore, up to homotopy, such a weak equivalence
corresponds to one of the two possible generators of $\Bbb Z$. 
To lift either of these weak equivalences to an algebra map,
it suffices to show that the right vertical map of the diagram 
\eqref{eqn:crux-diagram-one} is surjective on $\pi_0$. In fact,
we will show that the right vertical map is a retraction up 
homotopy.

It is reasonably well-known that 
$R\text{-}\Mod(S,S)$, considered as an $S$-module, coincides up to homotopy
 with $S$-$\Mod(B_{\text{alg}}R,S)$ where $B_{\text{alg}}R$
is the bar construction on $R$ in the category of augmented $S$-algebras.
Similarly, one can show that  $TR\text{-}\Mod(S,S)$ coincides up to homotopy with
$S$-$\Mod(B_{\text{alg}}TR,S)$. We need to understand the map
\begin{equation} \label{eqn:bar-maps}
B_{\Alg}TR \to B_{\Alg}R\, .
\end{equation}
The bar construction $B_{\Alg}R$ is not hard to identify as an $S$-module. 
The homotopy spectral sequence defined by the skeletal filtration
has $E^2$-term $E^2_{p,q} = \pi_q(S^{-p})$. It is a spectral sequence
of $\pi_\ast(S)$-modules and it evidently
degenerates at the $E^2$-page. So
we obtain a weak equivalence of $S$-modules
\[
B_{\Alg}R \simeq \bigvee_{j \ge 0} S\, .
\]
This computation shows $B_{\Alg}R$ coincides with the associated graded of the filtration
defined by skeleta.

As for $B_{\Alg}TR$, it coincides with $B_{\Mod}R$, the bar
construction of $R$ considered as an augmented $S$-module with respect to the 
monoidal structure given by the coproduct of augmented modules. 
Furthermore, $B_{\Mod}R$
is easily identified with $\Sigma_S R$, the (fiberwise) suspension of $R$ considered
as an augmented $S$-module.
As $R \simeq S \vee S^{-1}$ as augmented $S$-modules, we have $\Sigma_S R \simeq S \vee S$.
Therefore \eqref{eqn:bar-maps} amounts to the map
\[
\bigvee_{0 \le j \le 1} S \to \bigvee_{j\ge 0} S 
\]
given by the inclusion of the $1$-skeleton into $B_{\Alg}R$.
It is clear that this inclusion is a split summand, so the restriction map 
$S\text{-}\Mod(B_{\Alg}R,S) \to S\text{-}\Mod(B_{\Alg}TR,S)$ is a retraction
up to homotopy. In particular,  the right vertical map of \eqref{eqn:crux-diagram-one}
is a surjection on $\pi_0$.
\end{proof}

\begin{rem} \label{rem:example} The above proof actually shows that any augmented
$S$-algebra $R$ equipped with a weak equivalence to $S \vee S^{-1}$ as
an augmented $S$-module has a lifting to a weak equivalence 
as an augmented $S$-algebra. Furthermore, the proof gives
a homotopy fiber sequence
\[
\prod_{j \ge 2} \Omega^\infty S \to S\text{-}\Alg/S(R,S\vee S^{-1}) \to 
S\text{-}\Mod/S(R,S\vee S^{-1})\, .
\]
A version of this sequence also holds in the unaugmented case.
\end{rem}

We apply Proposition \ref{prop:free-loop} in the following instance. By the adjunction property, we have
\begin{equation} \label{eqn:exponential}
\Omega_1 {k\text{-}\Alg}(R,A) \cong {k\text{-}\Alg/A}(R,\cal L A)\, .
\end{equation}
We are now in a position to deduce Theorem \ref{bigthm:crux}:

\begin{cor} \label{cor:crux} Let $1\: R\to A$ be a $k$-algebra homomorphism .
Then there is a natural weak equivalence
\[
\Omega_1 {k\text{-}\Alg}(R,A) \simeq D^\bullet(R;A)\, .
\]
\end{cor}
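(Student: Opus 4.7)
The plan is to assemble the weak equivalence by chaining together the identifications already established in this section. Specifically, I would start from the right-hand side of \eqref{eqn:exponential}, which gives
\[
\Omega_1\, k\text{-}\Alg(R,A) \;\cong\; k\text{-}\Alg/A(R,\cal L A),
\]
and then successively rewrite the target using the tools developed above.

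First, I would invoke Proposition \ref{prop:free-loop} to replace $\cal LA$ by the trivial square-zero extension $A_1 = A \vee \Sigma^{-1}A$ as augmented $k$-algebras. Since Proposition \ref{prop:free-loop} provides a weak equivalence of \emph{augmented} $k$-algebras, passing to function complexes over $A$ gives a weak equivalence
\[
k\text{-}\Alg/A(R,\cal L A) \;\simeq\; k\text{-}\Alg/A(R,A \vee \Sigma^{-1}A).
\]
Next, applying Remark \ref{rem:extension} (with the $A$-bimodule $M = \Sigma^{-1} A$), the right-hand side is weakly equivalent to $\Der(R,\Sigma^{-1}A)$. Then Lemma \ref{lem:desuspend} yields
\[
\Der(R,\Sigma^{-1}A) \;\simeq\; \Omega\,\Der(R,A),
\]
and finally Corollary \ref{cor:reduced-hh-derivations} identifies $\Omega\,\Der(R,A)$ with $D^\bullet(R;A)$. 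Concatenating these gives the desired natural weak equivalence.

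The only real content in this proof is Proposition \ref{prop:free-loop}; every other link in the chain is either a formal adjunction, a naturality statement, or has been proved earlier. Accordingly, the main thing to verify in writing out the proof is that each identification is genuinely natural in the pair $(R, 1\: R\to A)$, and in particular that the basepoints match up: the distinguished basepoint of $D^\bullet(R;A)$ (coming from the unit component of $\Omega^\infty A$) corresponds under the chain to the constant loop at the homomorphism $1$ in $\Omega_1\,k\text{-}\Alg(R,A)$. This basepoint bookkeeping is the only potential subtlety, and it follows from the compatibility of the augmentations $\cal LA \to A$ and $A_1 \to A$ with the unit maps used to set up the basepoints in Remark \ref{rem:extension} and Corollary \ref{cor:reduced-hh-derivations}.
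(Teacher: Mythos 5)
Your proposal is correct and follows essentially the same chain as the paper's own proof: the adjunction \eqref{eqn:exponential}, Proposition \ref{prop:free-loop}, the identification with $\Der(R,\Sigma^{-1}A)$ (where you rightly cite Remark \ref{rem:extension}, which the paper glosses as ``by definition''), Lemma \ref{lem:desuspend}, and Corollary \ref{cor:reduced-hh-derivations}. Your added attention to naturality and basepoint bookkeeping is a reasonable refinement but does not change the argument.
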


\begin{proof} This uses the chain of weak equivalences
\begin{alignat*}{2}
\Omega_1 {k\text{-}\Alg}(R,A) &\simeq   
{k\text{-}\Alg/A}(R,A \vee \Sigma^{-1} A) \, , &&\text{ by \eqref{eqn:exponential}  and Prop.\ \ref{prop:free-loop} }\\
& = \Der(R,\Sigma^{-1}A)\, , &&\text{ by definition } \\
& \simeq
\Omega \Der(R;A)\, , &&\text{ by Lem.\ \ref{lem:desuspend} } \\
& \simeq D^\bullet(R;A) 
&&\text{ by Cor.\ \ref{cor:reduced-hh-derivations}}. 
\end{alignat*}
\end{proof}

\section{Proof of Theorem \ref{bigthm:main}\label{sec:main}}

Let $R$ be a $k$-algebra.
By essentially the same argument appearing in \S\ref{sec:spaces}, there is a 
homotopy fiber sequence of categories
\begin{equation} \label{eqn:fibering-rings}
\cal C_R \to \coprod_{[R']} hk\text{-}\Alg(R') \to  hk\text{-}\Mod(R)  \, ,
\end{equation}
in which the decomposition appearing in the middle is indexed over those components
of $hk$-$\Alg$ (= the category of $k$-algebra weak equivalences), 
which have the property that $R'$ is weak equivalent to $R$ as
a $k$-module. In other words, the middle category is the 
full subcategory of $hk$-$\Alg$ whose objects are weak equivalent to 
$R$ as a $k$-module.  Similarly, $hk\text{-}\Mod(R)$ denotes component
the category of  weak equivalences which contains $R$. The categories 
appearing in \eqref{eqn:fibering-rings} have a preferred basepoint
determined by the $k$-algebra $R$ and
$\cal C_R$ corresponds to the homotopy fiber at the basepoint.

The contracting data $\cal U$ for \eqref{eqn:fibering-rings} is given by the category 
whose objects are pairs $(N,h)$ where $N$ is an $R$-module and
$h\: R \to N$ is a weak equivalence of $R$-modules. A morphism $(N,h) \to (N',h')$ is 
a map $f\: N \to N'$ such that $h' = f\circ h$. The functor 
$\cal C_R \to \cal U$ is the forgetful functor, as is the functor 
$\cal U \to hk\text{-}\Mod(R)$. Moreover, $\cal U$ is contractible, since $(R,\text{id})$ is an initial object.

As in \S\ref{sec:spaces}, the strategy will be to identify the 
middle and last terms of \eqref{eqn:fibering-rings} as double deloopings
of the Hochschild cohomology spaces.

\begin{lem} \label{lem:HH-delooped} There are weak equivalences of based spaces
\[
D^\bullet(R;R) \,\, \simeq\,\,  \Omega^2 |hk\text{-}\Alg(R)|
\]
and 
\[
D^\bullet(TR;R) \,\, \simeq \,\, \Omega^2 |hk\text{-}\Mod(R)|\, .
\]
\end{lem}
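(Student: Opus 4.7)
The plan is to mirror the structure of the proof of Lemma \ref{lem:hochschild}, replacing $\Mon$ and $\Top_\ast$ by $k\text{-}\Alg$ and $k\text{-}\Mod$ respectively, and using Corollary \ref{cor:crux} (the content of Theorem \ref{bigthm:crux}) in place of the elementary loop-adjunction identification used in the spaces case. In both cases the rough shape is the same: apply the Dwyer-Kan equivalence \eqref{eqn:DK-equivalence} to recognize the classifying space of the category of weak equivalences as a classifying space of homotopy automorphisms, double loop it, and then identify the resulting loop space with the appropriate reduced Hochschild cohomology.

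For the first equivalence, I would begin with the Dwyer-Kan identification
\[
|hk\text{-}\Alg(R)| \,\, \simeq \,\, Bh\Aut_{k\text{-}\Alg}(R).
\]
Because $h\Aut_{k\text{-}\Alg}(R)$ is group-like by construction, passing to based loops twice gives
\[
\Omega^2|hk\text{-}\Alg(R)| \,\, \simeq \,\, \Omega_1 h\Aut_{k\text{-}\Alg}(R) \,\, \simeq \,\, \Omega_1 \, k\text{-}\Alg(R,R),
\]
where the last equivalence uses that the homotopy invertible components of $k\text{-}\Alg(R,R)$ already form a union of components of the full homotopy function complex. Corollary \ref{cor:crux}, applied to the identity homomorphism $1\: R \to R$, then yields
\[
\Omega_1 \, k\text{-}\Alg(R,R) \,\, \simeq \,\, D^\bullet(R;R),
\]
finishing the first case.

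For the second equivalence the same sequence of steps gives
\[
\Omega^2 |hk\text{-}\Mod(R)| \,\, \simeq \,\, \Omega_1 \, k\text{-}\Mod(R,R),
\]
and the remaining task is to rewrite this loop space as $D^\bullet(TR;R)$. Here I would use the Quillen adjunction between $T$ and the forgetful functor $k\text{-}\Alg \to k\text{-}\Mod$ (Schwede-Shipley), which on homotopy function complexes produces a natural weak equivalence
\[
k\text{-}\Alg(TR, R) \,\, \simeq \,\, k\text{-}\Mod(R,R).
\]
Under this equivalence the component of $1\: R \to R$ in the target corresponds to the component of the structure map $TR \to R$ in the source, so I obtain
\[
\Omega_1 \, k\text{-}\Mod(R,R) \,\, \simeq \,\, \Omega_1 \, k\text{-}\Alg(TR,R),
\]
and a second application of Corollary \ref{cor:crux}, now to the canonical homomorphism $TR \to R$, identifies the right side with $D^\bullet(TR;R)$.

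The main obstacle, and the only delicate point, is the compatibility of basepoints and components on which the whole argument hinges: one must check that the homotopy automorphisms of $R$ in $k\text{-}\Mod$ realize the derived loops of $k\text{-}\Mod(R,R)$ at the identity component, and that the adjunction isomorphism carries the component of the identity on $R$ to the component of $TR \to R$, so that Corollary \ref{cor:crux} is applied at the correct basepoint in each case. Both are formal once one works with cofibrant-fibrant replacements, but they should be spelled out to make sure the basepoints inherited from the algebra structure on $R$ match the ones built into $D^\bullet(R;R)$ and $D^\bullet(TR;R)$.
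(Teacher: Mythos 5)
Your proposal is correct and follows essentially the same route as the paper: Dwyer--Kan to identify the classifying spaces with $Bh\Aut$, double looping to reach $\Omega_1 k\text{-}\Alg(R,R)$ and (via the free/forgetful adjunction) $\Omega_1 k\text{-}\Alg(TR,R)$, then Corollary \ref{cor:crux} applied to $\mathrm{id}_R$ and to $TR \to R$ respectively. Your extra remarks on matching basepoints and components make explicit what the paper leaves implicit, but the argument is the same.
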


\begin{proof} Using the Dwyer-Kan equivalence,
 double loop space of $|hk\text{-}\Alg(R)|$ taken at the point defined by $R$ 
is identified with 
\[
\Omega^2 Bh\Aut_{k\text{-}\Alg}(R) \simeq \Omega_1 {k\text{-}\Alg}(R,R)\, .
\]
Using Corollary \ref{cor:crux} applied to the identity
map $R\to R$, we obtain a weak equivalence
\[
\Omega_1 {k\text{-}\Alg}(R,R)  \simeq D^\bullet(R;R) \, .
\] 
This establishes the first part of the lemma.

For the second part, we use the chain of identifications,
\begin{align*}
\Omega^2 |hk\text{-}\Mod(R)| \,\, & \simeq \,\,  \Omega^2 Bh\Aut_{k\text{-}\Mod}(R)\, ,\\
 \,\, & \simeq \,\, \Omega_1 h\Aut_{k\text{-}\Mod}(R) \, ,\\
  \,\, & \cong \,\,  \Omega_1 {k\text{-}\Alg}(TR,R)\, ,\\
  \,\, & \simeq \,\,D^\bullet(TR;R)\, .
\end{align*}
where the last weak equivalence is obtained from Corollary \ref{cor:crux} 
applied to the $k$-algebra map $TR \to R$.
\end{proof}

\begin{proof}[Proof of Theorem \ref{bigthm:main}]
Use
the homotopy fiber sequence \eqref{eqn:fibering-rings} together with 
Lemma \ref{lem:HH-delooped}. Note that with 
the  deloopings, the map
\[
D^\bullet(R;R) \to 
D^\bullet(TR;R) 
\]
has a preferred double delooping given by realizing the forgetful functor 
\[
\coprod_{[R']} hk\text{-}\Alg(R') \to  hk\text{-}\Mod(R)\, .   \qedhere
\]
\end{proof}

\section{The augmented case \label{sec:augment}}

\begin{defn} For an augmented $k$-algebra $R$ we define
the {\it moduli space of augmented $k$-algebra} structures on $R$,
\[
\cal M_{R/k} \, ,
\]
to be the classifying space of the category whose objects
are pairs $(E,h)$ in which $E$ is an augmented $k$-module and 
$h\: E \to R$ is a weak equivalence of augmented $k$-modules.
A morphism $(E,h) \to (E',h')$ is an augmentation preserving map
$f\: E \to E'$ such that $h'\circ f = h$.  
\end{defn}

It is a consequence of the definition that:
\begin{itemize}
\item There is an evident forgetful map 
\[
\cal M_{R/k}  \to \cal M_R\, .
\]
\item There is a homotopy fiber sequence
\[
\Omega^2 \cal M_{R/k} \to 
\Omega_1 k\text{-}\Alg/k(R,R) \to \Omega_1 k\text{-}\Mod/k(R,R)\, .
\]
\end{itemize}

\begin{defn} Let $M$ be an $R$-bimodule which is augmented over $k$. 
We set
\[
\HH^\bullet(R/k;M) := R^e\text{-}\Mod/k(R,M)\, ,
\]
i.e., the homotopy function complex associated to the augmented
bimodule maps $R \to M$.
\end{defn}

Given an augmented $k$-algebra map $1\: R \to A$, we may regard
$A$ is an augmented $R$-bimodule. Then restriction
defines a map of based spaces
\[
\HH^\bullet(R/k;A) \to \HH^\bullet(R^e/k;A)\, .
\]
and we let $D^\bullet(R/k;A)$ be the homotopy
fiber taken at the point defined by the bimodule map $R^e \to R\to A$.

The following is the augmented version of Theorem \ref{bigthm:main}. As the proof is
similar, we omit it. 

\begin{thm} \label{augmented} There is a homotopy fiber
sequence
\[
\cal M_{R/k} \to \cal B^2D^\bullet (R/k;A) \to \cal B^2
D^\bullet (TR/k;A)\, ,
\]
where the deloopings in each case are defined as in the proof of 
Theorem \ref{bigthm:main}.
\end{thm}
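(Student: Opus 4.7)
The plan is to run the proof of Theorem \ref{bigthm:main} in the augmented setting, replacing $k\text{-}\Alg$ by $k\text{-}\Alg/k$ and $k\text{-}\Mod$ by $k\text{-}\Mod/k$ throughout, and then to verify that the identifications of Section \ref{sec:crux} go through once augmentations are tracked. The first step is to construct, by the same argument as for \eqref{eqn:fibering-rings}, a homotopy fiber sequence of categories
\[
\cal C_{R/k} \to \coprod_{[R']} h(k\text{-}\Alg/k)(R') \to h(k\text{-}\Mod/k)(R),
\]
where $\cal C_{R/k}$ is the category defining $\cal M_{R/k}$ and the middle coproduct ranges over those components of the category of augmented $k$-algebra weak equivalences whose objects are equivalent to $R$ as augmented $k$-modules. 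The contracting data $\cal U$ is the category whose objects are pairs $(N,h)$ consisting of an augmented $k$-module $N$ together with an augmented weak equivalence $h\colon R \to N$; it is contractible because $(R,\id)$ is initial.

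The second step is the augmented analog of Lemma \ref{lem:HH-delooped}. By the Dwyer-Kan equivalence \eqref{eqn:DK-equivalence}, the double loop spaces of the realizations in the middle and right-hand terms coincide respectively with $\Omega_1 (k\text{-}\Alg/k)(R,R)$ and $\Omega_1 (k\text{-}\Mod/k)(R,R) \cong \Omega_1 (k\text{-}\Alg/k)(TR,R)$, the last isomorphism coming from the augmented version of the free/forgetful adjunction in which $T$ sends an augmented $k$-module to its augmented tensor algebra. The theorem thus reduces to proving the augmented analog of Corollary \ref{cor:crux}, namely a natural weak equivalence
\[
\Omega_1 (k\text{-}\Alg/k)(R,A) \simeq D^\bullet(R/k;A)
\]
for every augmented $k$-algebra map $R \to A$, together with its counterpart with $TR$ in place of $R$.

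For this augmented Corollary \ref{cor:crux}, my plan is to repeat the chain of weak equivalences in its proof while tracking augmentations at each stage. The exponential adjunction gives
\[
\Omega_1 (k\text{-}\Alg/k)(R,A) \cong (k\text{-}\Alg/k/A)(R, \cal L A),
\]
where $\cal L A = F(S^1_+,A)$ carries the augmentation induced by evaluation at the added basepoint of $S^1_+$. The remaining steps use the augmented versions of Proposition \ref{prop:Lazarev}, Remark \ref{rem:extension} and Lemma \ref{lem:desuspend}, each of which goes through verbatim provided that all mapping spaces, bimodules, homotopy fibers and loop spaces are interpreted inside the augmented categories $k\text{-}\Alg/k$ and $R^e\text{-}\Mod/k$. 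Finally, a preferred double delooping of the forgetful map $D^\bullet(R/k;A) \to D^\bullet(TR/k;A)$ is obtained, exactly as in the proof of Theorem \ref{bigthm:main}, by realizing the forgetful functor from the middle to the right-hand term of the categorical fiber sequence above.

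The main obstacle is the augmented version of Proposition \ref{prop:free-loop}: one must verify that, under the equivalence $\cal L A \simeq A_1 = A \vee \Sigma^{-1} A$, the augmentation on $\cal L A$ given by basepoint evaluation corresponds to the projection onto the first summand. This amounts to lifting Mandell's argument to the augmented category, and Remark \ref{rem:example} already supplies precisely this lifting, asserting that any augmented $S$-algebra weakly equivalent to $S \vee S^{-1}$ as an augmented $S$-module is in fact equivalent as an augmented $S$-algebra. The general $k$-linear case reduces to the case $k = S$ by smashing with $A$, so no new homotopy-theoretic input is required and the remainder of the argument is formal.
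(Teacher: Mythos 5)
Your proposal is correct and follows exactly the route the paper intends: the paper omits the proof of this theorem, stating only that it is ``similar'' to that of Theorem \ref{bigthm:main}, and your argument is precisely that proof transported to the augmented categories $k\text{-}\Alg/k$ and $k\text{-}\Mod/k$. Note that the one step you flag as the main obstacle is already in place, since Proposition \ref{prop:free-loop} is stated in the paper as an equivalence of \emph{augmented} $k$-algebras.
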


\section{Examples \label{sec:examples}}

Computations are somewhat easier to make in the augmented case, since in the unaugmented
setting one needs to understand $k\text{-}\Alg(R,k)$.

\subsection*{The square-zero case}
Let $R = S\vee S^{-1}$ be the trivial square zero extension of $S$.
We wish to study the homotopy type of $\cal M_R$ in this case.
The proof of  Proposition \ref{prop:free-loop}
 shows that the augmented version $\cal M_{R/S}$
is connected. Moreover, inspection of the proof
shows that it amounts to a computation of $\Omega\cal M_{R/S}$.
The result is that one gets a weak equivalence
\[
\Omega  \cal M_{R/S} \,\, \simeq\,\,  \prod_{j \ge 2} \Omega^\infty S\, .
\]
(cf.\ Remark \ref{rem:example}).
In the square-zero case, the relationship between $\Omega^2\cal M_{R/S}$
and $\Omega^2 \cal M_R$ is
easy to describe.

\begin{lem} \label{lem: moduli_as_L} When $R = S \vee S^{-1}$ is the trivial square zero extension,
there is a weak equivalence of based spaces
\[
\Omega^2 \cal M_R \,\, \simeq\,\,  L\Omega^2\cal M_{R/S} \, .
\]
\end{lem}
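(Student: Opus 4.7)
The plan is to relate $\Omega^2 \cal M_R$ and $\Omega^2 \cal M_{R/S}$ through the forgetful functor $\cal M_{R/S} \to \cal M_R$ and to recognize the resulting comparison as the canonical fibration $Y \to LY \to \Omega Y$ for $Y = \Omega^2 \cal M_{R/S}$. First, arguing as in Section \ref{sec:main} but now comparing the augmented function complexes of Section \ref{sec:augment} to their unaugmented counterparts, one obtains a commutative diagram
\[
\xymatrix{
\Omega^2 \cal M_{R/S} \ar[r] \ar[d] & \Omega_1 k\text{-}\Alg/S(R,R) \ar[r] \ar[d] & \Omega_1 k\text{-}\Mod/S(R,R) \ar[d] \\
\Omega^2 \cal M_R \ar[r] & \Omega_1 k\text{-}\Alg(R,R) \ar[r] & \Omega_1 k\text{-}\Mod(R,R)
}
\]
in which the rows and the two right-hand columns are homotopy fiber sequences. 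The homotopy fibers of the right-hand vertical maps are $\Omega_\epsilon k\text{-}\Alg(R,S)$ and $\Omega_\epsilon k\text{-}\Mod(R,S)$, respectively, based at the augmentation $\epsilon\colon R \to S$. The nine-lemma then produces a fiber sequence
\[
\Omega^2 \cal M_{R/S} \to \Omega^2 \cal M_R \to F, \qquad F := \text{hofiber}\bigl(\Omega_\epsilon k\text{-}\Alg(R,S) \to \Omega_\epsilon k\text{-}\Mod(R,S)\bigr).
\]

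Next, I would identify $F$ with $\Omega^3 \cal M_{R/S}$. By Corollary \ref{cor:crux} applied both to $\epsilon\colon R \to S$ and to the composite $TR \to R \to S$, we have $\Omega_\epsilon k\text{-}\Alg(R,S) \simeq D^\bullet(R;S)$ and $\Omega_\epsilon k\text{-}\Mod(R,S) \simeq D^\bullet(TR;S)$, so $F \simeq \text{hofiber}(D^\bullet(R;S) \to D^\bullet(TR;S))$. The claim is that for $R = S \vee S^{-1}$ this fiber is equivalent to $\prod_{j \ge 2} \Omega^\infty S^{-2}$, which matches $\Omega^3 \cal M_{R/S}$ by looping twice the computation $\Omega \cal M_{R/S} \simeq \prod_{j \ge 2} \Omega^\infty S$ stated just above the lemma. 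The argument should parallel the proof of Proposition \ref{prop:free-loop}, using the skeletal filtration of the bar construction $B_{\Alg} R$ identified there, but with coefficients in $S$ in place of $S \vee S^{-1}$.

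Finally, since $\Omega^2 \cal M_{R/S}$ is a two-fold loop space, it is a grouplike, homotopy-commutative $H$-space; its free loop space therefore splits as $LY \simeq Y \times \Omega Y$ for $Y = \Omega^2 \cal M_{R/S}$ and fits in a fiber sequence $Y \to LY \to \Omega Y$. Combining this with the identification from the previous two steps, and exhibiting a section, yields $\Omega^2 \cal M_R \simeq L \Omega^2 \cal M_{R/S}$. The hardest part will be the computation in Step 2: identifying $F$ with $\Omega^3 \cal M_{R/S}$ amounts to an analogue of the main theorem with ``residue field'' coefficients $S$, and requires extending the bar-construction argument from Proposition \ref{prop:free-loop} from $S\vee S^{-1}$-coefficients to $S$-coefficients. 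A secondary technical point is producing the section that trivializes the fibration in the last step, although once Step 2 is in place this should follow formally from the group-like $H$-space structure on $\Omega^2 \cal M_{R/S}$.
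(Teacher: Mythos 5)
Your proposal takes a genuinely different route from the paper, and as written it has two gaps that I do not think can be patched without importing the paper's main idea. The paper's proof rests entirely on one special feature of $R=S\vee S^{-1}$: by Proposition \ref{prop:free-loop}, $R\simeq \cal LS=F(S^1_+,S)$ is the homotopy pullback of the two diagonals $S\to S\times S$ in $S$-algebras, so mapping out of $R$ literally produces free loop spaces, $S\text{-}\Alg(R,R)\simeq L\,S\text{-}\Alg(R,S)$ and likewise for modules; the lemma then follows by taking fibers of the horizontal maps. You never use this pullback square, and instead try to conjure the $L$ at the very end from the splitting $LY\simeq Y\times \Omega Y$. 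That is where the argument breaks down.

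The first and most serious problem is your identification of $F$, which contradicts the paper's own computations. By \eqref{eqn:exponential} and Proposition \ref{prop:free-loop}, $\Omega_\epsilon\, S\text{-}\Alg(R,S)\simeq S\text{-}\Alg/S(R,\cal LS)=S\text{-}\Alg/S(R,S\vee S^{-1})$ and similarly $\Omega_\epsilon\, S\text{-}\Mod(R,S)\simeq S\text{-}\Mod/S(R,S\vee S^{-1})$; Remark \ref{rem:example} then gives
\[
F=\text{hofiber}\bigl(D^\bullet(R;S)\to D^\bullet(TR;S)\bigr)\,\,\simeq\,\,\prod_{j\ge 2}\Omega^\infty S\, ,
\]
i.e.\ a space of the homotopy type of $\Omega\cal M_{R/S}$, not $\prod_{j\ge2}\Omega^\infty S^{-2}\simeq\Omega^3\cal M_{R/S}$. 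So the ``hardest part'' you defer to Step 2 would come out two deloopings away from what your Step 3 needs, and the resulting fiber sequence is not of the form $Y\to E\to \Omega Y$. (A related bookkeeping slip feeds into this: the homotopy fiber of $\Omega_1 S\text{-}\Alg/S(R,R)\to \Omega_1 S\text{-}\Alg(R,R)$ is $\Omega^2_\epsilon S\text{-}\Alg(R,S)$, not $\Omega_\epsilon S\text{-}\Alg(R,S)$; the latter is the \emph{base} of the extended fibration sequence $\Omega_1 S\text{-}\Alg/S(R,R)\to\Omega_1 S\text{-}\Alg(R,R)\to\Omega_\epsilon S\text{-}\Alg(R,S)$, and which of the two you feed into the nine-lemma changes both the answer and the direction of the output sequence.) The second problem is independent: even granted a fiber sequence $Y\to \Omega^2\cal M_R\to \Omega Y$ with $Y=\Omega^2\cal M_{R/S}$, concluding $\Omega^2\cal M_R\simeq LY$ requires trivializing that fibration, and this does not follow formally from the group-like $H$-space structure on the fiber --- a fibration with group-like $H$-space fiber need not split (e.g.\ $S^1\to S^3\to S^2$) --- and you exhibit neither a section of $\Omega^2\cal M_R\to\Omega Y$ nor a retraction of $Y\to\Omega^2\cal M_R$. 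In the paper's argument no such splitting is needed, because the free loop space appears on the nose from the pullback square; the decomposition $LY\simeq Y\times\Omega Y$ is only invoked afterwards, in the corollary.
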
 

\begin{proof}(Sketch).
Using the homotopy cartesian diagram of $S$-algebras
\[
\xymatrix{
R \ar[r] \ar[d] & S \ar[d] \\
S  \ar[r] & S \times S
}
\]
in which each map $S \to S\times S$ is the diagonal, we can 
apply the homotopy function complex out of $R$ to obtain a homotopy cartesian square
\[
\xymatrix{
S\text{-}\Alg(R,R) \ar[d] \ar[r] & S\text{-}\Alg(R,S) \ar[d] \\
S\text{-}\Alg(R,S) \ar[r] & S\text{-}\Alg(R,S)  \times S\text{-}\Alg(R,S) 
}
\]
which shows 
\[
S\text{-}\Alg(R,R) \simeq LS\text{-}\Alg(R,S)\, .
\]
On the other hand, there is a homotopy fiber sequence
\[
S\text{-}\Alg/S(R,R) \to S\text{-}\Alg(R,R) \to S\text{-}\Alg(R,S)
\]
and Proposition \ref{prop:free-loop} gives a weak equivalence
$S\text{-}\Alg/S(R,R) \simeq \Omega_1 S\text{-}\Alg(R,S)$.
A careful check of how the identification is made, which we omit, enables us to deduce
\[ 
S\text{-}\Alg(R,R)  \simeq LS\text{-}\Alg(R,S) \, .
\]

A similar argument in the module case shows
$S\text{-}\Mod(R,R) \simeq LS\text{-}\Mod(R,S)$.  Hence there are weak equivalences
\begin{align*}
\Omega^2 \cal M_R &\simeq L\text{hofiber}(\Omega_1 S\text{-}\Alg(R,S) \to \Omega_1 S\text{-}\Mod(R,S)) \\
& \simeq 
L\text{hofiber}(S\text{-}\Alg/S(R,R) \to S\text{-}\Mod/S(R,R))\\
& \simeq L\Omega^2\cal M_{R/S}\, .
\end{align*}
\end{proof}

\begin{cor} When $R = S\vee S^{-1}$ is the trival square-zero extension, there
is a weak equivalence
\[
\Omega^2 \cal M_R \,\, \simeq \,\, \prod_{j \ge 2} \Omega^\infty S^{-1} \times \Omega^\infty S^{-2}\, .
\]
\end{cor}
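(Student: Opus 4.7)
The plan is to chain together the already-available identifications. By Lemma \ref{lem: moduli_as_L}, the left-hand side rewrites as
\[
\Omega^2\cal M_R \,\simeq\, L\,\Omega^2\cal M_{R/S}\, ,
\]
so the problem reduces to (i) computing $\Omega^2\cal M_{R/S}$ and (ii) taking its free loop space. For step (i), the remark immediately preceding Lemma \ref{lem: moduli_as_L}, together with Remark \ref{rem:example}, gives a weak equivalence
\[
\Omega\cal M_{R/S} \,\simeq\, \prod_{j\ge 2}\Omega^\infty S\, .
\]
Looping once more and commuting $\Omega$ past the product and past $\Omega^\infty$ (via $\Omega\Omega^\infty E \simeq \Omega^\infty \Sigma^{-1}E$), I would conclude
\[
\Omega^2\cal M_{R/S} \,\simeq\, \prod_{j\ge 2}\Omega^\infty S^{-1}\, .
\]

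For step (ii), I would invoke the standard fact that for any grouplike H-space $X$, the evaluation fibration $\Omega X \to LX \to X$ admits a section by constant loops and is fiberwise trivializable using the H-group multiplication, so that $LX \simeq X \times \Omega X$. The infinite loop space $\prod_{j\ge 2}\Omega^\infty S^{-1}$ certainly satisfies this hypothesis, and the free loop space construction commutes with products, so
\[
L\,\Omega^2\cal M_{R/S} \,\simeq\, \prod_{j\ge 2}\bigl(\Omega^\infty S^{-1}\times \Omega\,\Omega^\infty S^{-1}\bigr) \,\simeq\, \prod_{j\ge 2}\bigl(\Omega^\infty S^{-1}\times \Omega^\infty S^{-2}\bigr)\, ,
\]
which combined with the first display yields the stated formula.

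Since every ingredient is already in place, there is no genuine obstacle: the calculation is essentially formal manipulation. The only point requiring some care is confirming that the equivalence $\Omega\cal M_{R/S} \simeq \prod_{j\ge 2}\Omega^\infty S$ furnished by Remark \ref{rem:example} is sufficiently structured (an equivalence of H-groups, or better of infinite loop spaces) for the free-loop-space splitting to apply; this is clear because the source is itself a loop space and the target is evidently an infinite loop space, so the equivalence automatically has the required H-space compatibility after one further looping.
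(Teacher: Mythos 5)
Your argument is correct and is essentially the paper's own proof: the authors likewise take the equivalence $\Omega\cal M_{R/S}\simeq\prod_{j\ge 2}\Omega^\infty S$ from Remark \ref{rem:example}, loop once more, and then apply free loops together with Lemma \ref{lem: moduli_as_L}, with the splitting $LX\simeq X\times\Omega X$ for the grouplike space $X=\prod_{j\ge 2}\Omega^\infty S^{-1}$ supplying the final product decomposition. You have merely spelled out the details the paper leaves implicit, so there is nothing to change.
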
 

\begin{proof} It was noted already that Remark \ref{rem:example}  gives the computation
$
\Omega \cal M_{R/S} \,\, \simeq\,\,  \prod_{j \ge 2} \Omega^\infty S 
$.
Take the based loops of both sides, then apply 
 free loops  and use Lemma \ref{lem: moduli_as_L}.
\end{proof}

\subsection*{Commutative group rings}
 Suppose $R = S[G] :=  S \smsh (G_+)$ is the group ring 
on a topological abelian group $G$ (for technical reasons,
we assume that the underlying space of $G$ is cofibrant). 
Then the adjoint action of $G$ acting on $R$ is trivial, and
it is not difficult exhibit  a  weak equivalence
\[
\HH^\bullet(R;R) \simeq F((B G)_+,\Omega^\infty R) \, ,
\]
where the space on the right is the function space of unbased maps
$BG \to \Omega^\infty R$.
Similarly,
\[
\HH^\bullet(TR;R) \simeq F((\Sigma G)_+,\Omega^\infty R)) \, .
\]
The map $\HH^\bullet(R;R) \to \HH^\bullet(TR;R)$ is induced in this case
by the inclusion $\Sigma G \to B G$ given by 
the 1-skeleton of $BG$. Let $X_G = BG/\Sigma G$. Using 
Corollary \ref{cor:main}, we obtain a weak equivalence
\[
\Omega^2 \cal M_R \,\, \simeq \,\, F(X_G,\Omega^\infty R)\, .
\]
In particular, $\pi_\ast(\cal M_R) = R^{2-\ast}(X_G)$
is the shifted $R$-cohomology of $X_G$
for $k \ge 2$.  The space $X_G$ comes equipped with a filtration, so one
gets an Atiyah-Hirzebruch spectral sequence converging to its $R$-cohomology.

Let us specialize to case when $G = \Bbb Z$ is the integers. Then we have
$R = S[\Bbb Z] = S[t,t^{-1}]$
is the Laurent ring over $S$ in one generator. In this instance 
\[
X_{\Bbb Z} \,\, \simeq \,\, 
\bigvee_{j} S^2
\]
is a countable infinite wedge of $2$-spheres and we infer
\[
\Omega^2 \cal M_{S[\Bbb Z]}  \,\, \simeq\,\,  \Omega^2 \prod_{j} \Omega^\infty S[\Bbb Z] \, .
\]
The right side admits a further decomposition into an countable infinite
product of copies of $\Omega^\infty S$, using the $S$-module identification 
$S[\Bbb Z] \simeq \vee_j S$.


\begin{thebibliography}{EKMM}
\bibliographystyle{invent}
\bibliography{john}


\bibitem[A]{Angeltveit}%
Angeltveit, V.:
Topological Hochschild homology and cohomology of $A_\infty$ ring spectra. 
\newblock{Geom. Topol.} {\bf 12} (2008), 987--1032.


\bibitem[BV]{Boardman-Vogt}%
Boardman, J.~M.; Vogt, R.~M.: Homotopy invariant algebraic structures on topological spaces. \newblock Lecture Notes in Mathematics, Vol. 347. Springer-Verlag, Berlin-New York, 1973.

\bibitem[C]{Cohen}%
Cohen, R.~L.: Multiplicative properties of Atiyah duality. 
\newblock {\it Homology Homotopy Appl.} {\bf 6} (2004), 269--281.

\bibitem[DS]{Dugger-Shipley}%
Dugger, D., Shipley B.: Postnikov extensions of ring spectra.
\newblock {\it Alg. \& Geom. Topol} {\bf 6} (2006) 1785--1829.

\bibitem[D]{Dwyer}%
Dwyer, W.~G.: {\it Private communication.}
\newblock

\bibitem[DGI]{DGI}%
Dwyer, W.~G., Greenlees, J.~P.~C., Iyengar, S.: Duality in algebra and topology. 
\newblock {\it Adv. Math.} {\bf 200} (2006), 357--402.

\bibitem[DH]{DH}%
Dwyer, W., Hess, K.:
Long knots and maps between operads. 
\newblock{Geom. Topol.} {\bf 16} (2012), 919--955.


\bibitem[DK1]{DK1}%
 Dwyer W.~G., Kan D.~M.: 
Function complexes in homotopical algebra.
\newblock {\it Topology} {\bf 19}
(1980), 427--440.

\bibitem[DK2]{DK2}%
 Dwyer W.~G., Kan D.~M.: 
A classiÞcation theorem for diagrams of simplicial sets.
\newblock {\it Topology} {\bf 23} (1984), 139--155.




\bibitem[EKMM]{EKMM}%
 Elmendorf, A.~D., Kriz, I., Mandell, M.~A., May, J.~P. 
(with an appendix by Cole, M.), {\it Rings, modules, and algebras in stable homotopy theory}, \newblock Mathematical Surveys and Monographs 47 (American Mathematical Society, Providence, RI, 1997).

\bibitem[F]{Fuchs}%
Fuchs, M.:
Verallgemeinerte Homotopie-Homomorphismen und klassifizierende R\"aume.
\newblock{\it Math. Ann.} {\bf 161} (1965), 197--230.

\bibitem[KSS]{KSS}%
 Klein, J.~R., Schochet, C.~L., Smith, S.B.:
Continuous trace $C^\ast$-algebras, gauge groups and rationalization,
\newblock{\it J. Topol. Anal.} {\bf 1} (2009), 261--288.


\bibitem[La1]{Lazarev1}%
Lazarev, A.: Homotopy theory of $A_\infty$ ring spectra and applications to $M\text{\rm U}$-modules. 
\newblock {\it $K$-Theory} {\bf 24} (2001), 243Ð-281.

\bibitem[La2]{Lazarev2}%
Lazarev, A.: Spaces of multiplicative maps between highly structured ring spectra. 
\newblock {\it Categorical decomposition techniques in algebraic topology (Isle of Skye, 2001)}, 
237--259, Progr. Math., 215, BirkhŠuser, Basel, 2004.

\bibitem[La3]{Lazarev3}%
Lazarev, A.: 
Cohomology theories for highly structured ring spectra.
\newblock {\it Structured ring spectra}, 201--231, London Math. Soc. Lecture Note Ser., 315, Cambridge Univ. Press, Cambridge, 2004.

\bibitem[Lu]{Lurie}%
Lurie, J.: Higher Algebra.\\
\newblock  http:/\!\!/www.math.harvard.edu/$\sim$lurie/papers/HigherAlgebra.pdf




\bibitem[M]{May_fibration}%
May, J.~.P: Classifying spaces and fibrations. 
\newblock{\it Mem. Amer. Math. Soc.} {\bf 1} (1975), 1, no.\ 155.

\bibitem[Q]{Quillen}%
Quillen, D.~G.
Homotopical algebra. 
\newblock Lecture Notes in Mathematics, No. 43, Springer-Verlag, Berlin-New York, 1967


\bibitem[R1]{Robinson1}%
Robinson, A.:
Obstruction theory and the strict associativity of Morava K-theories. 
{\it Advances in homotopy theory (Cortona, 1988)}, 143--152, 
London Math. Soc. Lecture Note Ser., 139, Cambridge Univ. Press, Cambridge, 1989.

\bibitem[R2]{Robinson2}%
Robinson, A.: Classical obstructions and S-algebras. 
\newblock {\it Structured ring spectra}, 133--149, 
London Math. Soc. Lecture Note Ser., 315, Cambridge Univ. Press, Cambridge, 2004.

\bibitem[SV]{Schwaenzl-Vogt}%
Schw\"anzl, R., Vogt, R.~M.:
The categories of $A_\infty$- and $E_\infty$--monoids and ring spaces as closed simplicial and topological model categories. 
\newblock{\it Arch. Math. (Basel)} {\bf 56} (1991), 405--411

\bibitem[SS]{Schwede-Shipley}%
Schwede, S., Shipley, B.~E.:
Algebras and modules in monoidal model categories. 
\newblock{\it Proc. London Math. Soc.} {\bf 80} (2000), 491Ð-511.

\bibitem[St1]{Stasheff1}%
Stasheff, J.~D.: Homotopy associativity of H-spaces. I, II. 
\newblock{\it Trans. Amer. Math. Soc.} {\bf 108} (1963), 275--292; ibid. {\bf 108} 1963 293--312.

\bibitem[St2]{Stasheff2}%
Stasheff, J.: H-spaces from a homotopy point of view. 
\newblock Lecture Notes in Mathematics, Vol. 161 Springer-Verlag, Berlin-New York 1970.






\end{thebibliography}
\end{document}